\theoremstyle{plain}
\newtheorem{theorem}{Theorem}[section]
\newtheorem{lemma}[theorem]{Lemma}
\newtheorem{corollary}[theorem]{Corollary}
\newtheorem{remark}[theorem]{Remark}
\newtheorem{definition}[theorem]{Definition}
\newtheorem{problem}[theorem]{Problem}
\newcommand{\Gree}[4]{\mbox{$G^{#1}_{#2}(#3,#4)$}}
\newcommand{\kP}{k_{P}^{M}}
\newcommand{\Gw}{\Omega}
\newcommand{\Real}{\mathbb{R}}
\newcommand{\R}{\mathbb{R}}
\numberwithin{equation}{section}
\newcommand{\Hmm}[1]{\leavevmode{\marginpar{\tiny%
			$\hbox to 0mm{\hspace*{-0.5mm}$\leftarrow$\hss}%
			\vcenter{\vrule depth 0.1mm height 0.1mm width \the\marginparwidth}%
			\hbox to
			0mm{\hss$\rightarrow$\hspace*{-0.5mm}}$\\\relax\raggedright #1}}}
\def\ga{\alpha}            
                         \def\vge{\varepsilon}
       \def\vgf{\varphi}    
            \def\gl{\lambda}
\def\gm{\mu}                 
    \def\gr{\rho}
     \def\Gd{\Delta}
\def\Gw{\Omega}              
\newcommand{\dx}{\,\mathrm{d}x}
\newcommand{\dy}{\,\mathrm{d}y}
\newcommand{\dz}{\,\mathrm{d}z}
\newcommand{\dt}{\,\mathrm{d}t}
\numberwithin{equation}{section} \allowdisplaybreaks
\begin{document}
	\title[Stochastic completeness, and $L^1$-Liouville property]{Stochastic completeness and $L^1$-Liouville property for second-order elliptic operators}
	\author[Debdip Ganguly]{Debdip Ganguly}
	\address{\hbox{\parbox{5.7in}{\medskip\noindent{Department of Mathematics,\\
					Indian Institute of Technology Delhi,\\
					IIT Campus, Hauz Khas, Delhi,\\
					New Delhi 110016, India. \\[3pt]
					\em{E-mail address: }{\tt 
						debdipmath@gmail.com}}}}}
%%%%%%%%						
\author[Yehuda Pinchover]{Yehuda Pinchover}
\address{\hbox{\parbox{5.7in}{\medskip\noindent{Department of Mathematics, \\
 Technion - Israel Institute of
Technology, \\
  Haifa 32000, Israel. \\[3pt]
\em{E-mail address : }{ \tt pincho@technion.ac.il}}}}}
\author[Prasun Roychowdhury]{Prasun Roychowdhury}
\address{\hbox{\parbox{5.7in}{\medskip\noindent{Department of Mathematics,\\
					Indian Institute of Science Education and Research,\\
					Dr. Homi Bhabha Road, Pashan,\\
					Pune 411008, India. \\[3pt]
					\em{E-mail address: }{\tt prasunroychowdhury1994@gmail.com}}}}}
	\date{\today}
	\begin{abstract}
	Let $P$ be a linear, second-order, elliptic operator with real coefficients defined on a noncompact Riemannian manifold $M$ and satisfies $P1=0$ in $M$. Assume further that 
		$P$ admits a minimal positive Green function in $M$. We prove that there exists a smooth positive function $\rho$ 
	defined on $M$  such that $M$ is stochastically incomplete with respect to the operator $ P_{\rho} := \rho \, P  $,  that is, 
	%%%%%%%%%
		\[
		\int_{M} k_{P_{\rho}}^{M}(x, y, t) \ {\rm d}y < 1 \qquad \forall (x, t) \in M \times (0, \infty), 
		\]
		  where $k_{P_{\rho}}^{M}$ denotes the minimal positive heat kernel associated with $P_{\rho}$.
		Moreover,  $M$ is $L^1$-Liouville with respect to $P_{\rho}$ if and only if $M$ is $L^1$-Liouville with respect to $P$. In addition, we study the interplay between 
		stochastic completeness and the $L^1$-Liouville property of the skew product of two second-order elliptic operators. 
	\end{abstract}
	\subjclass[2010]{Primary 31C35,  ; Secondary 35J08, 35A08, 35B09, 58G03.}
	\keywords{Green function,  $L^1$-Liouville, optimal Hardy, stochastically incomplete.}
	\maketitle
%%%%%%%%%%%%%%%%%%%%%%%%%%%%%%%%%%%%%%%%%%%%%%%%%%%%%%%%%%%%%%%%%%%%%%%%%%%%%%%
%%%%%
\section{Introduction}
Let $M$ be a smooth, noncompact, connected Riemannian manifold of dimension $n$. Let $P$ be a linear, second-order, elliptic operator with real coefficients defined on  $M$, and satisfying $P1=0$ in $M$.

Denote by $\mathcal{C}_{P}(M)$ the cone of positive solutions of the equation $Pu=0$ in $M$. The {\em
	generalized principal eigenvalue} of the operator $P$ is defined by
$$ \gl_0 =\gl_0(P,M)
:= \sup\{\gl \in \mathbb{R} \; \mid\; \mathcal{C}_{P-\lambda}(M)\neq
\emptyset\}.$$
We say that $P$ is {\em nonnegative in} $M$ (and denote it by $P\geq 0$ in $M$) if $\lambda_0:= \lambda_0(P, M)\geq 0.$ Since $P1=0$ in $M$, it follows that 
$\gl_0\geq 0$, that is, $P\geq 0$ in $M.$ 

\medskip

Consider the parabolic operator
\begin{equation}\label{parabolic_operator}
	L u:= \frac{\partial u }{\partial t} + Pu \qquad (x, t) \in M \times (0,\infty),
\end{equation}
and let $k^{M}_{P}(x, y, t)$ be the \emph{minimal positive heat kernel} of the parabolic operator $L$ on the manifold $M.$ By definition, for a fixed $y \in M,$ the function 
$(x, t) \mapsto k^{M}_{P}(x, y, t)$ is a \emph{minimal} positive solution of the equation 
\begin{equation} \label{minimal_heat_kernel}
	Lu = 0 \quad \mbox{in} \ M\times (0, \infty),
\end{equation}
subject to the initial data $\delta_y,$ the Dirac distribution at $y \in M$.
\begin{definition}\label{def_crit}{\em
		Suppose that $\gl_0=\lambda_0(P, M)\geq 0$, and let $\kP$ be the corresponding heat kernel. We say that the operator~$P$ is \emph{subcritical} (respectively, \emph{critical}) in $M$ if for some $x \not = y$, (and therefore for any
		$x \not = y$),  $x,y\in M$, we have
		\begin{equation}\label{def.critical}
			\int_0^\infty k_P^{M}(x,y, t)\,\mathrm{d}t<\infty, \quad
			\left(\mbox{respectively, } \int_0^\infty
			k_P^{M}(x,y, t)\,\mathrm{d}t=\infty\right).
		\end{equation}
		If $P$ is subcritical in $M$, then
		\begin{equation}\label{G_k}
			G_P^{M}(x,y):=\int_0^\infty k_P^{M}(x,y, t)\,\mathrm{d}t \qquad x,y \in M
		\end{equation}
		is called the {\em minimal positive Green function}  of the operator $P$ in $M$.
	}
\end{definition} 
Next we introduce the notion of stochastically completeness.
\begin{definition}\label{defi1}
	{\em Let $(M, g)$ be a connected and noncompact Riemannian manifold of dimension $N.$ Let $P$ be an elliptic operator satisfying $P1=0$ in $M$. Then 
		$M$ is said to be \emph{stochastically complete} (respectively, \emph{stochastically incomplete}) with respect to $P$ if for some $(x, t) \in M \times (0, \infty)$
		(and therefore for all $(x, t) \in M \times (0, \infty)$) we have
		\[
		\int_{M} k_{ P}^{M}(x, y, t) \,\mathrm{d}y = 1, \quad \left(\mbox{respectively, }  \int_{M}
		k_P^{M}(x,y, t)\,\mathrm{d}y < 1 \right).
		\]
	}
\end{definition}
\begin{definition}\label{defi2}
	{\em Suppose that the manifold $M$ and the operator $P$ satisfy the above assumptions. We say that the {\em $L^1$-Liouville property holds on $M$} with respect to $P$  (in short, $M$ is $L^1$-Liouville with respect to $P$) if every nonnegative $L^1$-supersolution of the operator $P$ in $M$ is the constant function.
	}
\end{definition}
\begin{remark}\label{rem-1.4}
	{\em Recall that if $u$ is a positive solution of the equation $Pu=0$ in $M$, then the  generalized maximum principle implies that for any $t>0$, either
		$$	\int_{M} k_{ P}^{M}(x, y, t) u(y)\,\mathrm{d}y = u(x) \quad \mbox{or }  
		\int_{M} k_{ P}^{M}(x, y, t) u(y)\,\mathrm{d}y < u(x) \quad \forall x\in M .$$ 
			 Equivalently, for any $\gl>0$ either 
			 $$\gl \int_{M} G_{ P+\gl}^{M}(x, y) u(y) \,\mathrm{d}y = u(x) \quad \mbox{or }  
			 \gl \int_{M} G_{ P+\gl}^{M}(x, y) u(y)\,\mathrm{d}y < u(x) \quad \forall x\in M.$$			 
			 Moreover, $P$ is critical in $M$ if and only if $P$ admits a unique (up to a multiplicative constant) positive supersolution (see \cite{YP2} and references therein).  
			Therefore, if} $P$ is critical in $M$ and $P1 =0$, then $M$ is stochastically complete and $L^1$-Liouville with respect to $P.$
\end{remark}
Throughout the paper, unless otherwise stated,  we  assume that $P$ is subcritical in $M$.  In other words, we assume that $P$ admits a (unique) minimal positive Green function $G_P^{M}$ on $M$. In this paper we are mainly interested in the following question:
\begin{problem} 
	{\em Given an operator $P$ of the form \eqref{P} on $M$, construct an 
	operator $P_{\rho}: = \rho \, P$ for some positive smooth function $\rho$ on $M$  such that $M$ is stochastically incomplete with respect to $ P_{\rho}$, and such that $M$ is  $L^1$-Liouville with respect to  $P_{\rho}$ if and only 
	if $M$ is $L^1$-Liouville with respect to $P$.}
\end{problem}
We provide an affirmative answer to the above question. Our proof rests on certain constructions of positive supersolutions and related Hardy weights \cite{DFP,GP}, and a generalization of the  Omori-Yau maximum principle (see \cite{BB,BPS}).

\medskip

Stochastically completeness and the $L^1$-Liouville property on manifolds have been studied extensively in the recent years. 
We mention a few of them without a claim of completeness (see \cite{BB, GR1,MV,NN,BPS,PRS}),  a more comprehensive reference can be found in \cite{GR2} and references therein.
It has been shown that certain natural geometric assumptions on manifolds ensure the validity of these properties on a complete manifold, e.g., volume growth, bound 
on the curvature, etc. These questions were well understood in case of the Laplace-Beltrami operator $\Delta_g$ using potential theoretic arguments and exploiting some underlying geometric conditions. 
We mention also \cite{PPS}, where the authors study properties like parabolicity, stochastic completeness and $L^1$-Liouville property in a manifold with boundary,  under the
Dirichlet boundary condition. In \cite{DSA} the authors studied the parabolicity of a manifold with respect to the Neumann boundary conditions.  
Moreover, recently stochastic completeness was characterized in \cite{GIM} in terms of certain properties of nonlinear evolution equations of fast diffusion.
Furthermore,  in \cite{YP1} the author constructed an example of an operator $P$ defined on a complete Riemannian manifold $M$, which 
does not admit invariant positive solution at the bottom of the spectrum, providing a counter example to Stroock's conjecture (see also \cite{GR2,RP}). It turns out that the constructed manifold $M$ 
is stochastically incomplete and not $L^1$-Liouville with respect to $P$. The present article is partly motivated also from the construction 
of  the  example in \cite{YP1}, aiming to find a unified approach to construct stochastically incomplete manifold with respect to a second-order elliptic operator.

\medskip

All the aforementioned results are the primary motivation of the study in this paper. Indeed, we study the two notions defined in definitions~\ref{defi1} and \ref{defi2} for general second-order elliptic operators $P$ satisfying $P1=0$ in $M$.   
The rest of the paper is organized as follows. In Section~\ref{sec2}, we state the assumptions on the operator $P$, and formulate our main result in Theorem~\ref{main_theorem}. Section~\ref{sec3} is devoted 
to the proof of a few key lemmas which provide equivalent conditions  for stochastic completeness
 and the validity of the $L^1$-Liouville property for second-order elliptic operator $P$. In Section~\ref{sec4}, 
we prove our main result (Theorem~\ref{main_theorem}). Section~\ref{sec5} is devoted to the study of the $L^1$-Liouville property and the stochastic (in)completeness for skew product 
operators. Finally, in Section~\ref{sec_optimal-Hardy}
we give an alternative proof of Theorem~\ref{main_theorem}.	 
%%%%%%%%%
\section{Preliminaries and the main result}\label{sec2}
This section is devoted to the statement of the main result of the paper. Before going further we must introduce some technical assumptions and few definitions.  

\medskip

Let $M$ be a smooth, noncompact, and connected manifold of dimension $n$, where $n \geq 2$. We consider a second-order elliptic operator $P$ defined on $M$ with real coefficients 
% and  of the %divergence form %
which (in any coordinate system
$(U;x_{1},\ldots,x_{n})$) is of the form
\begin{equation} \label{P}
Pu=-\sum_{i,j =1}^n a^{ij}(x)\partial_{i}\partial_{j}u + b(x)\cdot\nabla u. 
\end{equation}
We assume that for every $x\in M$ the matrix $A(x):=\big[a^{ij}(x)\big]$ is symmetric and that the real quadratic form
\begin{equation*}\label{ellip}
\xi \cdot A(x) \xi := \sum_{i,j =1}^n \xi_i a^{ij}(x) \xi_j \qquad \text{for }
\xi \in \Real ^n
\end{equation*}
is positive definite. Throughout the paper it is assumed that the coefficients of $P$ are H\"{o}lder continuous in $M$. By a {\em solution $v$} of the equation $Pu =0$ in $M$, we mean that $v \in C^2(M)$
 and satisfies the equation pointwise. If $P$ is of the form \eqref{P}, we obviously have 
%%%%%%%
\begin{equation}\label{div}
 P1=  0.
\end{equation}
Whenever we consider the adjoint operator we assume that $P^*$, the formal adjoint operator of $P$, also has H\"{o}lder continuous coefficients. We say that $P$ is {\em symmetric} if $P=P^*$.
%%%%%%
\begin{remark}
	{\em It is well known that any smooth manifold may be considered as an embedded manifold in $\R^N$, for some $N\geq n$. Hence, the Euclidean metric and the Lebesgue measure on $\R^N$ 
	induce a Riemannian metric and a measure on $M$, which will be considered throughout the paper in case the given manifold $M$ is not a priori Riemannian. Another possible structure is 
	induced by the principal part of the operator $P$. Namely, the matrix $A=\left(a^{ij}\right)$ induces a Riemannian metric $g_A$ on the manifold $M$ by
$$\left(g(\partial/\partial x_i, \partial/\partial x_j)\right) := \left(a_{ij}\right)= A^{-1}.$$	  
}
\end{remark}
%%%%%%%%%%%
We recall two fundamental properties of the heat kernel $k_P^M$ associated with the operator $P$.
\begin{lemma}\label{properties}
	Let $P$ be a nonnegative second-order elliptic operator defined on $M$. Then the minimal positive heat kernel $k_{P}^{M} (x, y, t)$ satisfies the following properties:
	\begin{enumerate}
		\item $k_{P}^{M} (x, y, t)$  satisfies the Chapman-Kolmogorov equation (the semigroup property):
		\[
		k_{P}^{M} (x, y, s + t) = \int_{M} k_{P}^{M} (x, z, s) k_{P}^{M} (z, y, t) \dz \qquad  \forall s, t > 0 \mbox{ and } \forall \ x, y \in M.
		\]
		%\medskip
		\item
		$
		\kP(x, y, t) \geq 0  \qquad \forall t > 0  \mbox{ and } \ \forall  x, y \in M.
		$	
	\end{enumerate}
\end{lemma}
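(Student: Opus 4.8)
The plan is to construct $k_P^M$ as the increasing limit of Dirichlet heat kernels over an exhaustion of $M$, and then to transfer both properties from the exhausting domains to $M$ via monotone convergence. First I would fix an exhaustion $M_1 \Subset M_2 \Subset \cdots$ of $M$ by relatively compact open subsets with smooth boundary and $\bigcup_k M_k = M$. On each $M_k$ the parabolic operator $L$ admits a Dirichlet heat kernel $k_P^{M_k}(x,y,t)$, namely the integral kernel of the Dirichlet heat semigroup associated with $P$ on $M_k$; since $P$ is nonnegative (indeed $\lambda_0 \geq 0$ because $P1=0$), the parabolic maximum principle is available on each $M_k$.

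Next I would record two facts about these Dirichlet kernels. The strong parabolic maximum principle gives positivity, $k_P^{M_k}(x,y,t) > 0$ for $x,y \in M_k$ and $t>0$, since the initial datum $\delta_y$ is nonnegative and nontrivial. Monotonicity, $k_P^{M_k} \leq k_P^{M_{k+1}}$ on $M_k \times M_k \times (0,\infty)$, follows by comparing the two kernels as solutions on $M_k \times (0,\infty)$: for fixed $y \in M_k$ their difference has vanishing initial data and nonnegative boundary values on $\partial M_k$ (where $k_P^{M_k}=0 \leq k_P^{M_{k+1}}$), so the maximum principle forces it to be nonnegative. By the very definition of the minimal positive heat kernel, this nondecreasing sequence converges to it, $k_P^{M_k}(x,y,t) \nearrow k_P^M(x,y,t)$. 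Property (2) is then immediate, as the nonnegativity of each term passes to the limit.

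For Property (1), I would first write the Chapman--Kolmogorov identity on each $M_k$, which is simply the semigroup law $e^{-(s+t)P}=e^{-sP}e^{-tP}$ on $M_k$ expressed at the level of integral kernels. Extending each $k_P^{M_k}$ by zero outside $M_k$, the integrands $z \mapsto k_P^{M_k}(x,z,s)\,k_P^{M_k}(z,y,t)$ form a nondecreasing sequence of nonnegative measurable functions on $M$ converging pointwise to $k_P^M(x,z,s)\,k_P^M(z,y,t)$. The monotone convergence theorem then yields
\[
\int_{M_k} k_P^{M_k}(x,z,s)\,k_P^{M_k}(z,y,t)\dz \longrightarrow \int_{M} k_P^M(x,z,s)\,k_P^M(z,y,t)\dz ,
\]
while the left-hand sides $k_P^{M_k}(x,y,s+t)$ converge to $k_P^M(x,y,s+t)$ by the previous step; equating the two limits gives the Chapman--Kolmogorov identity on $M$.

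The main obstacle is the care required in the passage to the limit rather than any isolated computation: one must ensure that the exhaustion produces a genuinely monotone sequence of kernels, so that monotone convergence applies with no extra integrability hypothesis, and that the resulting limit is exactly the minimal positive heat kernel and not some larger positive solution of $Lu=0$. Both points rest on the same comparison argument via the parabolic maximum principle, which is precisely where the standing hypothesis $P \geq 0$ and the minimality in the construction of $k_P^M$ enter.
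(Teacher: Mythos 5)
The paper offers no proof of this lemma at all: it is presented as a recollection of standard facts about the minimal positive heat kernel, with the supporting theory left to the literature (Grigor'yan's survey and Pinsky's book, cited as [GR2] and [RP]), so there is no internal argument to compare yours against. Your exhaustion argument is the standard construction underlying these facts and is essentially correct: Dirichlet kernels $k_P^{M_k}$ on a smooth compact exhaustion, positivity and monotonicity from the parabolic maximum principle (available because $P$ has no zeroth-order term, so constants are solutions), the semigroup identity on each $M_k$, and monotone convergence to pass Chapman--Kolmogorov to the limit. The one step you state too quickly is the sentence ``by the very definition of the minimal positive heat kernel, this nondecreasing sequence converges to it.'' The paper defines $k_P^M(\cdot,y,\cdot)$ as the minimal positive solution of $Lu=0$ with initial datum $\delta_y$, not as the limit of the $k_P^{M_k}$, so the identification needs an argument: the maximum principle on each $M_k$ gives $k_P^{M_k}\leq k_P^M$, hence the nondecreasing limit is finite and dominated by $k_P^M$; interior parabolic (Schauder) estimates then show that the limit of this locally bounded increasing sequence of solutions is itself a positive solution with initial datum $\delta_y$; and the minimality of $k_P^M$ gives the reverse inequality. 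Note that the comparison argument alone yields minimality but not finiteness or the solution property of the limit, so this regularity step is genuinely needed. With that spelled out, both properties follow exactly as you say; in particular, the monotone convergence step for Chapman--Kolmogorov requires no integrability hypotheses, which is the right way to handle a possibly nonsymmetric $P$.
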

Now we are in a position to state the main theorem of the paper. 
\begin{theorem}\label{main_theorem}
	Let $(M, g)$ be a smooth connected and noncompact manifold of dimension $n \geq 2.$ Let $P$ be a  subcritical operator in $M$ of the form \eqref{P}.
	Assume that $G(x):=G_P^M(x,o)$, the minimal positive Green function with a singularity at $o\in M$, satisfies 
	\begin{equation}\label{eq-G0}
	\underset{x\rightarrow \bar \infty}\lim G(x) \, = \, 0,
	\end{equation}
	where $\bar\infty$ is the ideal point of the one-point compactification of $M$. 
	
	Then there exists a positive smooth function $\rho$ defined on $M$ such that the operator $P_{\rho} := \rho P $ satisfies the following properties: 
	\medskip

	$(1)$  $M$  is stochastically incomplete with respect to $P_{\rho}.$
	
	\medskip 

     $ (2)$    $M$ is $L^1$-Liouville with respect to $P_\rho$ if and only if $M$ is  $L^1$-Liouville with respect to $P.$
\end{theorem}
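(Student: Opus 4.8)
The plan is to reduce the theorem to the equivalent conditions for stochastic incompleteness and for the $L^1$-Liouville property that are established in Section~\ref{sec3}: a Khasminskii-type criterion, to the effect that for an operator $Q$ with $Q1=0$ the manifold $M$ is stochastically incomplete with respect to $Q$ precisely when $(Q+\lambda)w=0$ admits a bounded, positive, nonconstant solution $w$ for some $\lambda>0$; and a companion criterion for the $L^1$-Liouville property built on a generalization of the Omori--Yau maximum principle (cf.\ \cite{BB,BPS}). Granting these, the whole statement reduces to exhibiting a single $\rho$, together with an auxiliary bounded solution, and then to tracking how the replacement $P\rightsquigarrow P_\rho=\rho P$ alters the two integrability classes entering Definition~\ref{defi2}.

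For $(1)$ the decisive point is that subcriticality already produces a strictly positive supersolution. Since $P$ is subcritical, the Hardy-weight constructions of \cite{DFP,GP} provide a positive function $W$ and a positive $\phi\in C^2(M)$ with $P\phi\ge W\phi>0$ in $M$. Put $w:=e^{-\phi}$, so that $0<w<1$; the chain rule for an operator of the form \eqref{P} gives
\[
P\big(e^{-\phi}\big)=-\,e^{-\phi}\Big(P\phi+\sum_{i,j=1}^{n}a^{ij}\partial_i\phi\,\partial_j\phi\Big),
\]
and the bracketed term is strictly positive everywhere, because $P\phi>0$ and $A=(a^{ij})$ is positive definite. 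This is precisely why I use a strict supersolution rather than the Green function directly: the naive choice $w=f(G)$ yields $Pw=-f''(G)\sum_{i,j}a^{ij}\partial_iG\,\partial_jG$, which vanishes at the critical points of $G$ and so cannot be inverted. I then set
\[
\rho:=\Big(P\phi+\sum_{i,j=1}^{n}a^{ij}\partial_i\phi\,\partial_j\phi\Big)^{-1}>0,
\]
a positive smooth function for which $P_\rho w=\rho\,Pw=-w$, i.e.\ $(P_\rho+1)w=0$ (note $P_\rho 1=0$) with $w$ bounded, positive, and necessarily nonconstant. The Khasminskii-type criterion then gives the stochastic incompleteness of $M$ with respect to $P_\rho$, which is $(1)$.

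I do not expect the hypothesis \eqref{eq-G0} to be needed for $(1)$ by itself; rather it is used to select $\phi$, and hence $\rho$, with controlled behaviour near $\bar\infty$, which is what makes $(2)$ work. Concretely, following \cite{DFP,GP} one can take $\phi$ to coincide near infinity with a power $G^{\alpha}$, $0<\alpha<1$, of the Green function (smoothly modified near the pole $o$ so as to remain a bounded, strict supersolution on all of $M$), after adding a strictly positive Hardy weight to secure strictness at the critical points of $G$. Because $G\to0$ at $\bar\infty$ by \eqref{eq-G0}, such $\phi$ is bounded, and the bracket $P\phi+\sum_{i,j}a^{ij}\partial_i\phi\,\partial_j\phi$ is then bounded above while tending to $0$ at $\bar\infty$; equivalently $\rho$ is bounded below by a positive constant and $\rho\to\infty$ near $\bar\infty$. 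This is to be expected: heuristically, making $P_\rho$ stochastically incomplete requires speeding the process up near infinity, i.e.\ $\rho^{-1}\to0$ there.

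For $(2)$ note first that, since $\rho>0$, a function is a nonnegative supersolution of $P_\rho$ if and only if it is one of $P$; the two $L^1$-Liouville properties therefore differ only through the measure against which integrability is tested, namely $\dmu$ for $P$ and the intrinsic measure $\rho^{-1}\dmu$ for $P_\rho$. Since $\rho\ge c>0$, we have $L^1(M,\dmu)\subseteq L^1(M,\rho^{-1}\dmu)$, whence a nonconstant nonnegative $L^1(M,\dmu)$-supersolution is also $\rho^{-1}\dmu$-integrable; this gives at once that the $L^1$-Liouville property for $P_\rho$ implies that for $P$. The substantive direction is the converse. As $\rho^{-1}\to0$ at $\bar\infty$, the class $L^1(M,\rho^{-1}\dmu)$ is genuinely larger, and I must show it contains no new nonconstant nonnegative supersolution. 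This is exactly where \eqref{eq-G0} is indispensable: using the vanishing of $G$ at $\bar\infty$, the Riesz decomposition of nonnegative supersolutions of the subcritical operator $P$, and the generalized Omori--Yau maximum principle, one should force every $\rho^{-1}\dmu$-integrable nonnegative supersolution to be already $\dmu$-integrable, so that the two integrable cones coincide and the Liouville properties agree. Matching the prescribed decay of $\rho^{-1}$ (dictated by $\phi\sim G^{\alpha}$ near infinity) against the admissible growth of supersolutions is the crux of the proof, and the step I expect to be the main obstacle.
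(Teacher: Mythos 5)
Your part $(1)$ is sound in substance but takes a genuinely different route from the paper. The paper chooses $\rho=1/\mu$, where $\mu$ is a smooth positive density whose Green potential $G_\mu(x)=\int_M G_P^M(x,y)\mu(y)\dy$ is finite, takes $u=-G_\mu$ as test function, and notes that $(-P_\rho)u\equiv 1$, which violates the Omori--Yau-type characterization of stochastic completeness (Remark~\ref{stoch_incomplete}, i.e.\ the negation of item $(5)$ of Lemma~\ref{lem1}). You instead produce an exact bounded positive solution $w=e^{-\phi}$ of $(P_\rho+1)w=0$ and invoke item $(2)$ of Lemma~\ref{lem1}; both are legitimate uses of Lemma~\ref{lem1}, and your remark that \eqref{eq-G0} is not really needed for $(1)$ is defensible (in the paper's proof too, $(-P_\rho)u\equiv1$ everywhere, so every sequence works). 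Two caveats, though. First, your appeal to \cite{DFP,GP} for a $\phi$ with $P\phi\ge W\phi>0$ \emph{everywhere} is inaccurate: the optimal Hardy weight of \cite{DFP} is only nonnegative and can vanish at critical points of the Green potential, which is precisely why the paper's alternative proof in Section~\ref{sec_optimal-Hardy} must \emph{assume} $W>0$ near $\bar\infty$. The simplest correct choice is $\phi=G_\mu$ itself, for which $P\phi=\mu>0$. Second, your $\rho=\bigl(P\phi+\sum_{i,j}a^{ij}\partial_i\phi\,\partial_j\phi\bigr)^{-1}$ need not be \emph{smooth}: the coefficients of $P$ are only H\"older continuous, so $P\phi$ is only H\"older, while the theorem asserts a smooth $\rho$. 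This is fixable (replace $\rho$ by a smooth function between $\rho/2$ and $\rho$, so $w$ becomes a bounded positive subsolution of $P_{\rho}+1/2$, and run the sub/supersolution--Perron argument as in the proof of $(2)\Rightarrow(4)$ of Lemma~\ref{lem1}), but you do not address it.

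Part $(2)$ is where the genuine gap lies, and it stems from a misreading of Definition~\ref{defi2}. In this paper the $L^1$-Liouville property for $P_\rho$ is tested in the \emph{same} space $L^1(M)$, with the fixed measure $\dx$, as for $P$; no ``intrinsic measure'' $\rho^{-1}\dx$ enters anywhere, and Lemma~\ref{liouville_lemma} applied to $P_\rho$ concerns $\int_M G^M_{P_\rho}(x,y)\dx$. With this reading, $(2)$ is immediate from Lemma~\ref{lem2}: since $G^M_{P_\rho}(x,y)=G^M_P(x,y)/\rho(y)$, for fixed $y$ one has
\begin{equation*}
\int_M G^M_{P_\rho}(x,y)\dx=\frac{1}{\rho(y)}\int_M G^M_P(x,y)\dx,
\end{equation*}
and the factor $1/\rho(y)$ is a finite positive constant, so the two integrals are simultaneously finite or infinite; Lemma~\ref{liouville_lemma} then yields the equivalence. (Even more directly: since $\rho>0$, the cones of nonnegative supersolutions of $P$ and $P_\rho$ coincide, and the $L^1$ class is unchanged.) Your proposal instead replaces the $L^1$ class for $P_\rho$ by $L^1(M,\rho^{-1}\dx)$, which changes the statement being proved, and then the direction you yourself call substantive --- that every $\rho^{-1}\dx$-integrable nonnegative supersolution is already $\dx$-integrable --- is left entirely unproven; the invocation of a Riesz decomposition and the Omori--Yau principle is a hope, not an argument, as you concede (``the main obstacle''). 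So as written, part $(2)$ of your proposal is incomplete; the repair is simply to use the paper's definition, under which the claim follows in two lines from Lemmas~\ref{liouville_lemma} and~\ref{lem2}.
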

\begin{remark}
	{\em Consider a 2-dimensional Riemannian manifold $(M,g)$, and consider a conformal change of the metric $g\mapsto \hat g := (\gr(x))^{-1}g$, where $\gr$ is a positive smooth function on $M$. 
	Then the Laplace-Beltrami operator on $(M,\hat g)$ is given by $\Gd_{\hat g}=\gr(x)\Gd_g$. 
	} 
\end{remark}
%%%%%%%%%%%%%%%%%%%%%%%%%%%%
\section{Key Lemmas}\label{sec3}
In this  section we  prove some of the main ingredients used in the proof of Theorem~\ref{main_theorem}. Recently there have been extensive research on the equivalent conditions of 
stochastic completeness/incompleteness (see \cite{BB, BPS, PPS, PRS}). In these papers it has been shown that stochastic completeness is equivalent to 
Omori-Yau maximum principle. However, all these results are proved  in the particular case of $P := -\Delta_{g}$, where  $\Delta_{g}$ is the  Laplace-Beltrami operator in $M$. It turns out that  one can 
easily extend with some modifications these results to the case of second-order  elliptic operators satisfying  appropriate assumptions. The next lemma is devoted to address this characterization. 
\begin{lemma}\label{lem1}
	Let $M$ be a smooth, connected, noncompact manifold,  and let $P$ be a subcritical operator (not necessarily symmetric) of the form \eqref{P}.
	Then the following assertions are equivalent. 
	\begin{enumerate}
		\item $M$ is stochastically complete with respect to $P$. 
		
		\medskip
		
		\item For every $\lambda < 0,$ the only nonnegative, bounded classical solution $u$ of $Pu = \lambda u$ is $u = 0.$ 
		
		\medskip
		
		\item If for any $T\in (0, \infty),$ the Cauchy problem 
		\begin{align*}
		\begin{cases}
			&\frac{\partial v}{\partial t} + P v = 0,  \\
			& v|_{t = 0^{+}} = 0,
\end{cases}
		\end{align*}
		has a bounded solution in $M \times (0, T)$, 
		%(the initial data is understood in $L^1_{\loc}(M, \dx)$), 
		then necessarily $v \, = \,0.$
		
		\medskip
		
		\item For  $u \in C^2(M)$ with $\sup_{M} u  < \infty $ and for  $\alpha > 0$, define
		\begin{equation}\label{eq_gwga}
		 \Omega_{\alpha} := \{ x \in M \mid  u(x) > \sup_{M} u - \alpha \}. 
		 \end{equation}
		 Then for every such $u$  and $\ga>0$, we have $\inf_{\Omega_{\alpha}} (-P)u \leq 0.$

		\medskip
		
		\item For every $u \in C^{2}(M)$ with $\sup_{M} u  < \infty,$ there exists a sequence $\{ x_{n} \}$ such that for every $n\in \mathbb{N}$, 
		$$
		 u(x_{n}) \geq \sup_{M} u - \frac{1}{n}\,,  \quad \emph{and} \quad  (-P) u(x_{n}) \leq \frac{1}{n}\,.
		$$
	\end{enumerate}
\end{lemma}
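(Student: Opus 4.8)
The plan is to prove the cycle of implications $(1)\Rightarrow(4)\Rightarrow(5)\Rightarrow(2)\Rightarrow(1)$, together with the separate equivalence $(1)\Leftrightarrow(3)$; this shows that all five assertions coincide. Two analytic objects will drive the argument. First, for $\lambda>0$ I introduce $u_\lambda(x):=1-\lambda\int_M G_{P+\lambda}^M(x,y)\dy$. Using $G_{P+\lambda}^M(x,y)=\int_0^\infty \mathrm{e}^{-\lambda t}\kP(x,y,t)\dt$ and $\int_0^\infty \lambda \mathrm{e}^{-\lambda t}\dt=1$, one checks that $0\leq u_\lambda\leq 1$, that $(P+\lambda)u_\lambda=0$ (since $(P+\lambda)\int_M G_{P+\lambda}^M(\cdot,y)\dy=1$ and $P1=0$), and that $u_\lambda\equiv 0$ exactly when $\int_M\kP(x,y,t)\dy=1$, i.e. when $M$ is stochastically complete. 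Second, the heat deficiency $h(x,t):=1-\int_M\kP(x,y,t)\dy\in[0,1]$ solves $\partial_t h+Ph=0$ with $h(\cdot,0^+)=0$, and $M$ is stochastically complete if and only if $h\equiv 0$.

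Several implications are then straightforward. The equivalence $(4)\Leftrightarrow(5)$ is elementary: applying $(4)$ with $\alpha=1/n$ produces, for each $n$, a point $x_n\in\Omega_{1/n}$ with $(-P)u(x_n)\leq 1/n$, which is precisely the sequence in $(5)$, and reading this backwards gives $(5)\Rightarrow(4)$. For $(5)\Rightarrow(2)$, let $u\geq 0$ be bounded with $Pu=\lambda u$, $\lambda<0$; since $(-P)u=|\lambda|u$, the sequence from $(5)$ forces $|\lambda|u(x_n)\leq 1/n$ while $u(x_n)\geq \sup_M u-1/n$, whence $\sup_M u\leq 0$ and $u\equiv 0$. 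For $(2)\Rightarrow(1)$ I argue contrapositively: if $M$ is stochastically incomplete, then $u_\lambda\not\equiv 0$ is a nonnegative, bounded, nontrivial solution of $Pu_\lambda=-\lambda u_\lambda$ with $-\lambda<0$, contradicting $(2)$. Finally $(1)\Leftrightarrow(3)$: incompleteness makes $h\not\equiv 0$ a nontrivial bounded solution of the Cauchy problem with zero initial data, so $(3)$ fails; conversely, given any bounded $v$ with zero initial data and $|v|\leq C$, a parabolic comparison on cylinders $B_R\times(0,T)$ against $C\,h_R$, where $h_R:=1-\int_{B_R}k_P^{B_R}(\cdot,y,t)\dy$ equals $1$ on $\partial B_R$ and decreases to $h$ as $R\to\infty$, yields $|v|\leq C h$, so that $h\equiv 0$ forces $v\equiv 0$.

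The substantial step is $(1)\Rightarrow(4)$, proved by contraposition. Assume $(4)$ fails: there are $u\in C^2(M)$ with $\sup_M u<\infty$, and $\alpha,c>0$ with $(-P)u\geq c$ on $\Omega_\alpha$. Normalizing $\sup_M u=0$, so that $\Omega_\alpha=\{u>-\alpha\}$, the key computation is that for $f\in C^2(\R)$ and $w:=f(u)$,
\[(-P)w=f'(u)\,(-P)u+f''(u)\,\langle A\grad u,\grad u\rangle,\]
where the last term is nonnegative whenever $f''\geq 0$, because $A$ is positive definite. I then choose $f$ nondecreasing and convex on $(-\alpha,0]$, with $f\equiv 0$ on $(-\infty,-\alpha]$ matched to second order at $-\alpha$, $f>0$ on $(-\alpha,0]$, and $c\,f'\geq \lambda f$ on $(-\alpha,0]$ for a fixed $\lambda>0$. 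Then $(-P)w\geq c\,f'(u)\geq \lambda f(u)=\lambda w$ on $\Omega_\alpha$, while on $\{u\leq-\alpha\}$ the function $w$ together with its gradient and Hessian vanish, so $(-P)w=0=\lambda w$ there; hence $(-P)w\geq \lambda w$ on all of $M$. The resulting $w$ is nonnegative, bounded, and $\sup_M w=f(0)>0$. Comparing $w$ with $f(0)\,u_\lambda$ along an exhaustion $B_R$, using that $P+\lambda$ satisfies the maximum principle on relatively compact domains (its generalized principal eigenvalue is positive since $\lambda>0$ and $P\geq 0$) and that the Dirichlet solutions $u_\lambda^R=1-\lambda\int_{B_R}G_{P+\lambda}^{B_R}(\cdot,y)\dy$ decrease to $u_\lambda$, gives $w\leq f(0)\,u_\lambda$. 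As $\sup_M w>0$, this forces $u_\lambda\not\equiv 0$, i.e. stochastic incompleteness, which is the negation of $(1)$.

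The main obstacle is the construction and deployment of the auxiliary profile $f$: it must be $C^2$ (so that $w=f(u)\in C^2(M)$), flatten to zero to second order at $-\alpha$ so that the differential inequality survives on the region $\{u\leq-\alpha\}$, where no sign information on $Pu$ is available, and obey $c f'\geq\lambda f$ on $(-\alpha,0]$; a profile behaving like $(s+\alpha)^3$ near $-\alpha$ and exponentially farther in can be arranged, but verifying convexity across the junction is delicate. A second point requiring care, and where the setting departs from the Laplacian case of \cite{BB,BPS,PRS}, is that $P$ is non-symmetric and carries a drift; this is exactly why the composition identity is organized so that the drift is absorbed into $(-P)u$, leaving only the genuinely second-order remainder $f''(u)\,\langle A\grad u,\grad u\rangle$, whose favorable sign comes from the ellipticity of $A$ (equivalently, from working with the metric $g_A$). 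Once the maximum principle for $P+\lambda$ on relatively compact subdomains is invoked, the elliptic and parabolic comparison arguments go through essentially verbatim.
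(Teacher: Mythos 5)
Your proof is correct, but it is organized differently from the paper's, and the hard implication is handled by a genuinely different argument. The paper closes the loop as $(1)\Rightarrow(3)\Rightarrow(2)\Rightarrow(1)$ together with $(2)\Rightarrow(4)\Rightarrow(5)\Rightarrow(2)$, whereas you run $(1)\Rightarrow(4)\Rightarrow(5)\Rightarrow(2)\Rightarrow(1)$ and prove $(1)\Leftrightarrow(3)$ separately. The overlap: your $u_\lambda = 1-\lambda\int_M G^M_{P+\lambda}(\cdot,y)\,\mathrm{d}y$ is exactly the paper's function $w=1-\lambda v$ in its proof of $(2)\Rightarrow(1)$, and your $(5)\Rightarrow(2)$ is the step the paper calls trivial, spelled out. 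The substantive divergence is the implication involving $(4)$: the paper proves $(2)\Rightarrow(4)$ by contradiction, forming $u_\alpha := \max\{u+\alpha-\sup_M u,\,0\}$ --- a merely continuous (weak) subsolution of $P+\lambda$ with $\lambda = C/(2\alpha)$ --- bounding it by a constant supersolution, and invoking Perron's sub/supersolution method to manufacture a nontrivial bounded solution of $(P+\lambda)v=0$, contradicting $(2)$. You instead prove $(1)\Rightarrow(4)$ by contraposition: the composition $w=f(u)$ with a convex, nondecreasing $C^2$ profile vanishing to second order at $-\alpha$ is an honest $C^2$ subsolution of $P+\lambda$ (your chain-rule identity and the sign of $f''(u)\,\langle A\nabla u,\nabla u\rangle$ are right), and comparison against the Dirichlet resolvents $f(0)\,u^R_\lambda$ on an exhaustion --- legitimate since $\lambda_0(P+\lambda,B_R)\geq\lambda>0$ --- yields $w\leq f(0)\,u_\lambda$ with $\sup_M w=f(0)>0$, hence $u_\lambda\not\equiv 0$, i.e.\ stochastic incompleteness directly. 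Your route buys smoothness (no weak subsolutions, no Perron) and lands on incompleteness itself rather than on a contradiction with $(2)$; the cost is the profile construction and the monotone limit $u^R_\lambda\downarrow u_\lambda$. Note the profile is less delicate than you fear: $f(s)=\bigl((s+\alpha)^{+}\bigr)^3$ with $\lambda = 3c/\alpha$ already meets all your requirements ($C^2$ across $-\alpha$, convex and nondecreasing, $c f'\geq \lambda f$ on $(-\alpha,0]$), so no exponential piece or junction matching is needed. Finally, for $(3)$ the paper argues $(3)\Rightarrow(2)$ by subtracting the two solutions $e^{-\lambda t}v$ and $\mathcal{P}_t v$ of the same Cauchy problem, and $(1)\Rightarrow(3)$ by quoting minimality of $\mathcal{P}_t 1$; your two-sided argument via the heat deficiency $h$ and the cylinder comparison $|v|\leq C h_R$ is in effect a re-proof of that minimality, so it is sound, only more self-contained.
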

The  above lemma is known for the case when $P : = -\Delta_g.$ A complete proof can be found, for instance in (\cite[Theorem~6.2]{GR2},   \cite[Theorem~1.1]{PRS}), 
see also \cite{GR1}. For the sake of completeness we present the proof below,  with the necessary modifications needed for a general operator $P$ of the form \eqref{P}.
\begin{proof}
	$ (2) \implies (1)$. Suppose that $ (1) $ is not true. Then $M$ is stochastically incomplete with respect to $P$. Define 
	$$u(x, t)=\mathcal{P}_t 1(x,t):= \int_{M} k^{M}_{P}(x, y, t)\dy.$$ 
	By our assumption, $ 0\leq  u(x, t) < 1$.  For any $\lambda >0$  set,
	$$
	v(x) := \int_{0}^{\infty} e^{-\lambda t} u(x, t)  \mbox{d}t = \int_M G_{P+\gl}^M(x,y)\dy. 
	$$
	It can be easily checked (see \cite{YP2}) that  
	\begin{align*}
		(P+\gl)v(x) = 1.
	\end{align*}
	Clearly, $0 < v < 1/\lambda$. Let $w := 1-\lambda v$. Then  $(P+\gl)w=0$ and $0<w<1$, which contradicts $(2)$. 
	
	\medskip 
	
	$ (3) \implies (2)$. Suppose $(2)$ is not true, and let $v$ be a bounded nonzero nonnegative function satisfying $Pv = \lambda v$ with $\gl<0$. We may assume that 
		$0<v \leq 1$. Then $u(x, t) := e^{-\lambda t} v(x)$ solves the Cauchy problem 
	\begin{equation}\label{cauchy0}
		\begin{cases}
		&\frac{\partial u}{\partial t} + P u = 0,  \\
		& u|_{t = 0^{+}} = v(x).
		\end{cases}
	\end{equation}
	Also, $w(x,t):= \int_M k^{M}_{P}(x, y, t)v(y) \dy$ solves the above Cauchy problem. Using the minimality of $w$, we conclude that $0 \leq w \leq 1$. Moreover, we claim $u \neq w$ for $t > 0$. Indeed, since $\int_{M} k^{M}_{P}(x, y, t)\dy\leq 1$, it follows that 
	$$
	\sup_{x \in M} w(x, t) \leq \sup_{x \in M} v(x).
	$$
	On the other hand, for $t > 0$
	$$
	\sup_{x \in M} u(x, t) = e^{-\lambda t} \sup_{x \in M} v(x) > \sup_{x \in M} v(x) \geq \sup_{x \in M} w(x, t),
	$$
	and the claim is proved. Hence, the nonzero nonnegative and bounded function $z:= u - w$ solves the Cauchy problem 
	\begin{align}\label{eq-Cauchy0}
\begin{cases}
		&\frac{\partial z}{\partial t} + P z = 0,  \\
		& z|_{t = 0^{+}} = 0,
		\end{cases}
	\end{align}
which contradicts $(3).$
	\medskip
	
	$(1) \implies (3).$ We also show this by contraposition. Assume  $u(x,t)$ is  a nonzero bounded solution of \eqref{eq-Cauchy0} in $M \times (0, T)$ for some $T > 0$. Without loss of generality, we may assume that $\sup_{x \in M} u(x,t) > 0$ and $\sup_{x \in M} |u(x,t)| < 1$. Then the function $w:=1 - u$ is positive and $\inf_{x \in M} w(x,t) < 1$. Since the function $w$ is a solution to the Cauchy problem
	\begin{align}\label{cauchy1}
	\begin{cases}
		&\frac{\partial w}{\partial t} + P w = 0,  \\
		& w|_{t = 0^{+}} = 1,
		\end{cases}
	\end{align}
	and  $\mathcal{P}_t1:= \int_M K_P^M(x,y,t)\dy$ is the minimal positive solution to \eqref{cauchy1}, we conclude that $\mathcal{P}_t 1 \leq w$.
	Therefore, for some $x\in M$  and $t\in (0,T)$,
	$$
	\mathcal{P}_t 1=\int_M K_P^M(x,y,t)\dy <1,
	$$
	and M is stochastically incomplete. 
	\medskip
	
	$(4) \implies (5)$ and $(5) \implies (2)$ are trivial. Now to complete the proof it remains to show $(2) \implies (4).$  We argue by a contradiction. Assume that there exists a 
	function $u \in C^2(M)$ with $\sup_{M} u  < \infty $ satisfying for some $\alpha > 0$  
	$$
	\inf_{\Omega_{\alpha}} (-Pu) \geq C > 0,
	$$
	where $\Gw_\ga$ is defined in \eqref{eq_gwga}. Then following \cite[Theorem~1.1]{PRS}, we set 
	$$\Omega^* := \left\{ x \in M : (-P) u > \frac{C}{2} \right\}.$$ 
	Obviously, $\overline{\Gw_\ga}\subset \Gw^*$. Moreover,  $u + \alpha - \sup_{M} u$ is a 
	subsolution of $(P + \lambda)$ on $\Gw^*$ with $\lambda = C/(2\alpha)$.  Hence $u_{\alpha} := \mbox{max} \{u + \alpha - \sup_{M} u, 0 \}$ is a subsolution of
	$(P  + \lambda)$ in $M$.  Clearly,  $0 < u_\alpha \leq \alpha.$ Furthermore, any positive constant is a supersolution of $P+\gl$ in $M$,
	 and choosing a constant strictly greater than $\alpha$ we have a supersolution $u^{+} > u_{\alpha}.$
	
	Using the
	``sub/supersolution method" and Perron's method for $P+\gl$, we conclude that there exists a solution $v$ of the equation $(P+\gl)u=0$ in $M$ that satisfies  $0<u_{\alpha} \leq v \leq u^+$ which does not vanish identically, but this contradicts $(2)$.	
\end{proof}
\begin{remark}\label{stoch_incomplete}
	{\em
		In view of Lemma~\ref{lem1},  stochastically incomplete manifold with respect to $P$ is characterized by the existence of a function $u \in C^{2} (M)$ with $ u^{*} := \sup_{M} u < \infty$ such that for any sequence $\{ x_{n} \} \in M$ satisfying
		\begin{equation} \label{Given_hypothesis}
			\lim_{n \rightarrow \infty} u(x_{n}) = u^{*} 
		\end{equation}
		we have 
		$$
		\limsup_{n \rightarrow \infty} (-P) u(x_{n}) > 0.
		$$
	}
\end{remark}
Next, we address the $L^1$-Liouville property. Grigor'yan proved in \cite{GR1, GR2} that the $L^1$-Liouville property with respect to the Laplace-Beltrami operator $\Gd_g$ is equivalent to the non-integrability of the Green function of $\Gd_g.$  Using Grigor'yan's approach  (see also \cite{CC}), we extend in the following lemma the aforementioned result to the case of a subcritical operator of the form \eqref{P}. In fact, Corollary~\ref{def1}, demonstrates that the $L^1$-Liouville property is equivalent to  the non-integrability of the heat kernel of $P$.
\begin{lemma}\label{liouville_lemma}
	Let $(M, g)$ be a smooth, connected, noncompact manifold,  and let $P$ be a subcritical operator of the form \eqref{P}.
	Then the following assertions are equivalent. 
	\begin{enumerate}
		
		\item $M$ is not $L^1$-Liouville with respect to $P$.
		
		%\medskip
		
		\item   For any $y \in M$, the minimal Green function $G^{M}_{P}(\cdot,y)$ is in $L^{1}(M)$.
		
	\end{enumerate}
\end{lemma}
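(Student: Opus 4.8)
The plan is to prove the two implications separately; the reverse implication $(2)\Rightarrow(1)$ is immediate, while the forward implication $(1)\Rightarrow(2)$ carries all the weight. For the easy direction, assume $(2)$ and fix a pole $y$. Then $x\mapsto G_P^M(x,y)$ is a nonnegative solution of $Pu=0$ in $M\setminus\{y\}$ with a positive singularity at $y$, hence a nonnegative weak supersolution of $P$ on all of $M$ with $PG_P^M(\cdot,y)=\delta_y\ge 0$. By hypothesis it lies in $L^1(M)$, and since subcriticality forces it to be a genuine potential with a pole (in particular nonconstant), it is a nonconstant nonnegative $L^1$-supersolution. Thus $M$ fails the $L^1$-Liouville property, which is $(1)$.

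For $(1)\Rightarrow(2)$, I would start from a nonnegative, nonconstant $L^1$-supersolution $u$ and, following Grigor'yan, extract from it a genuine Green potential dominated by an $L^1$ function, then transfer integrability to $G_P^M$ by Tonelli's theorem. The difficulty is that $u$ might itself be $P$-harmonic, in which case its Riesz measure $Pu$ vanishes and no potential appears. To circumvent this I would first replace $u$ by the truncation $\tilde u:=\min\{u,c\}$, where $c$ is chosen with $\inf_M u<c<\sup_M u$ (possible since $u$ is nonconstant). As the minimum of the two supersolutions $u$ and the constant $c$ (recall $P1=0$), $\tilde u$ is again a supersolution; it is bounded by $c$ and dominated by $u$, hence a bounded nonnegative $L^1$-supersolution, and it is nonconstant. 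Its Riesz measure $\nu:=P\tilde u\ge 0$ is now nonzero: were $\nu=0$, then $\tilde u$ would be $P$-harmonic and equal to the constant $c$ on the nonempty open set $\{u>c\}$, so by unique continuation $\tilde u\equiv c$, contradicting $\tilde u=u<c$ on $\{u<c\}$.

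Next I would exploit the semigroup monotonicity. Since $\mathcal{P}_t$ is order preserving and $\tilde u$ is a bounded supersolution, the Duhamel identity $\tilde u-\mathcal{P}_t\tilde u=\int_0^t\mathcal{P}_s\nu\ds$ holds with left-hand side $\le\tilde u$; letting $t\to\infty$ gives the Green-potential bound $\int_M G_P^M(\cdot,z)\,\mathrm{d}\nu(z)=\int_0^\infty\mathcal{P}_s\nu\ds\le\tilde u$. In particular this potential belongs to $L^1(M)$, and Tonelli's theorem yields
\[
\int_M\Big(\int_M G_P^M(x,z)\dx\Big)\,\mathrm{d}\nu(z)=\int_M\Big(\int_M G_P^M(x,z)\,\mathrm{d}\nu(z)\Big)\dx\le\int_M\tilde u\dx<\infty .
\]
Since $\nu\ne 0$, there is a point $z_0$ in the support of $\nu$ with $\int_M G_P^M(x,z_0)\dx<\infty$, that is, $G_P^M(\cdot,z_0)\in L^1(M)$.

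Finally I would upgrade integrability at one pole to all poles $y\in M$. Fixing a compact set $K$ containing $y$, $z_0$ and the reference point $o$, the Green singularity is locally integrable on $K$, while on $M\setminus K$ both $G_P^M(\cdot,y)$ and $G_P^M(\cdot,z_0)$ are positive solutions of $Pu=0$, so a Harnack chain gives $G_P^M(x,y)\le C\,G_P^M(x,z_0)$ for $x\notin K$; integrating yields $G_P^M(\cdot,y)\in L^1(M)$, which is $(2)$. I expect the main obstacle to be the forward direction: concretely, ruling out the purely harmonic case (handled by the truncation producing a nonzero Riesz measure) and rigorously justifying the Duhamel identity together with the semigroup monotonicity for the bounded supersolution $\tilde u$ in the non-symmetric setting. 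The Harnack-chain propagation is routine but relies on the uniform ellipticity and Hölder continuity of the coefficients of \eqref{P}.
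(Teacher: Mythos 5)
Your easy direction $(2)\Rightarrow(1)$ matches the paper's, except that the paper takes the truncation $\min\{G_P^M(\cdot,y),1\}$ (the minimum of two supersolutions, since $P1=0$) so that the supersolution is finite and bounded, avoiding the infinite value of $G_P^M(\cdot,y)$ at the pole. For the main direction $(1)\Rightarrow(2)$ your route --- truncate $u$, extract a nonzero Riesz measure $\nu$, dominate its Green potential by $\tilde u$, and apply Tonelli --- is Grigor'yan's original scheme and is genuinely different from the paper's proof, which never introduces a Riesz measure: the paper takes a nontrivial nonnegative $L^1$-supersolution $u$, gets $u>0$ by the strong maximum principle, compares $G_P^M(\cdot,y)\le C u$ on the compact set $\partial K$, $K=\overline{B(y,1)}$, and propagates this bound to all of $M\setminus K$ by the generalized maximum principle applied to the Dirichlet Green functions $G_P^{M_j}(\cdot,y)$ of a smooth exhaustion $\{M_j\}$ (these vanish on $\partial M_j$), letting $j\to\infty$. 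This yields integrability of $G_P^M(\cdot,y)$ for \emph{every} pole $y$ simultaneously, with no need to transfer integrability from one pole to another.

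The transfer step is exactly where your argument has a genuine gap. You assert that ``a Harnack chain gives $G_P^M(x,y)\le C\,G_P^M(x,z_0)$ for $x\notin K$''. Harnack's inequality compares values of \emph{one} positive solution at different points of a \emph{compact} set; it does not compare two different solutions at the same point, and on the noncompact set $M\setminus K$ Harnack chains have unbounded length, so there is no uniform constant. Two correct fixes exist: (a) the paper's own argument --- comparison on $\partial K$ plus the maximum principle on the exhaustion $M_j\setminus K$, which crucially uses that the Dirichlet Green functions vanish on $\partial M_j$ (i.e.\ the minimality of $G_P^M$); or (b) Harnack applied in the \emph{second} variable: for $x$ outside a compact neighborhood $K'$ of $K$, the function $z\mapsto G_P^M(x,z)$ is a positive solution of the adjoint equation $P^*w=0$ near $K$, so Harnack on the compact set $K$ gives the desired bound with $C$ independent of $x$ --- but this requires regularity of $P^*$, which the lemma does not assume. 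Two smaller repairs: your appeal to ``unique continuation'' is not available for merely H\"older continuous coefficients (Pli\'{s}-type counterexamples in dimension $\ge 3$); replace it by the strong maximum principle --- if $\nu=0$ then $c-\tilde u\ge0$ is a solution of $Pw=0$ vanishing on the nonempty open set $\{u>c\}$, hence identically zero. Finally, the Duhamel identity for a weak supersolution of a nonsymmetric operator is only heuristic as written; it is cleaner to invoke the Riesz decomposition theorem (as in Constantinescu--Cornea \cite{CC}, cited by the paper), which directly gives $\int_M G_P^M(\cdot,z)\,\mathrm{d}\nu(z)\le\tilde u$, after which your Tonelli argument is fine.
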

\begin{proof}
	$ (2) \implies (1).$ Define 
	$$
	u := \min \{ G^{M}_{P}(\cdot,y), 1\}.
	$$
	Since $P1 = 0,$ it follows that  $u$ is a nonconstant positive $L^{1}(M)$-supersolution.  Hence, $M$ is not $L^1$-Liouville with respect to $P$.
	
	$ (1) \implies (2).$ Since $P1 = 0$, the operator $P$ satisfies the weak and strong maximum principle. Suppose that $u \in L^1(M)$ is a nontrivial nonnegative supersolution of $P$ in $M$. Then by the  strong maximum principle, $u > 0$ in $M$. 
	
	Fix $y \in M$ and a compact smooth exhaustion $\{ M_{j} \}_{j = 1}^{\infty}$ of $M$ such that $K=\overline{B(y, 1)}  \Subset M_1$. Using Harnack's inequality,
	$$
	G^{M}_{P}(x,y) \leq C u(x),  \quad \forall x \in \partial K,
	$$ 
	where $C := C(K)$ is a positive constant. Then using the \emph{generalized maximum principle}, we obtain $ G^{M_j}_{P}(x,y) \leq C u(x),  \quad \forall x \in M_j \setminus K$, 
	where $G^{M_j}_{P}(y,x)$ denotes the Dirichlet Green function in $M_j.$ Now letting $j \rightarrow \infty$,  we obtain 
	$$
	G^{M}_{P}(x,y) \leq C u(x),  \quad \forall x \in  M\setminus K.
	$$
	Therefore,
	\begin{align*}
		\int_{M} G^{M}_{P}(x,y) \dx & =  \int_{K} G^{M}_{P}(x,y)\dx + \int_{M\setminus K} G^{M}_{P}(x,y)\dx \\
		& \leq  \int_{K} G^{M}_{P}(x,y) \dx + C \int_{M\setminus K} u(x) \dx<\infty
	\end{align*}
	since $G^{M}_{P}$ is locally integrable. Hence, $G^{M}_{P}(\cdot,y)$ is in $L^{1}(M)$.
\end{proof}

\medskip

In view of the above lemma, we have. 
\begin{corollary}\label{def1}
	Let $M$ be a smooth, connected, noncompact manifold, and let $P$ be an elliptic operator of the form \eqref{P}. Consider its heat kernel $k_P^M(x, y_,t)$. 
	Then $M$ is $L^1$-Liouville with respect to $P$ if and only if there exists $y_0\in M$ such that $$\int_M\int_0^\infty k_P^M(x, y_0,t)\dt \dx=\infty.$$
\end{corollary}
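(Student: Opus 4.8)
The plan is to prove Corollary~\ref{def1} as a direct consequence of Lemma~\ref{liouville_lemma} together with the definition \eqref{G_k} of the minimal positive Green function as the time-integral of the heat kernel. The content of the corollary is the equivalence: $M$ is $L^1$-Liouville with respect to $P$ if and only if $\int_M \int_0^\infty k_P^M(x,y_0,t)\,\mathrm{d}t\,\mathrm{d}x = \infty$ for some $y_0 \in M$. I would work with the contrapositive on each side so as to line up cleanly with Lemma~\ref{liouville_lemma}, which is phrased in terms of the \emph{negation} of the $L^1$-Liouville property.

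First I would recall from \eqref{G_k} that for the subcritical operator $P$ we have $G_P^M(x,y_0) = \int_0^\infty k_P^M(x,y_0,t)\,\mathrm{d}t$ for every $x,y_0 \in M$; here subcriticality is exactly what guarantees this time-integral converges pointwise (for $x \neq y_0$) and defines the Green function, so the corollary is implicitly under the running assumption that $P$ is subcritical. Applying Tonelli's theorem to the nonnegative integrand $k_P^M$ (nonnegativity is property~(2) of Lemma~\ref{properties}), I would interchange the order of integration to obtain
\begin{equation*}
\int_M \int_0^\infty k_P^M(x,y_0,t)\,\mathrm{d}t\,\mathrm{d}x = \int_M G_P^M(x,y_0)\,\mathrm{d}x,
\end{equation*}
so that the double integral is finite precisely when $G_P^M(\cdot,y_0) \in L^1(M)$.

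Next I would combine this with Lemma~\ref{liouville_lemma}. That lemma states that $M$ fails to be $L^1$-Liouville with respect to $P$ if and only if $G_P^M(\cdot,y) \in L^1(M)$ for any (equivalently, some) $y \in M$. Hence the negation reads: $M$ \emph{is} $L^1$-Liouville with respect to $P$ if and only if $G_P^M(\cdot,y_0) \notin L^1(M)$, i.e. $\int_M G_P^M(x,y_0)\,\mathrm{d}x = \infty$. Chaining this with the Tonelli identity above yields exactly the asserted equivalence. One small bookkeeping point worth flagging is the "for some $y_0$'' versus "for any $y$'' quantifier: Lemma~\ref{liouville_lemma}(2) is stated for arbitrary $y$, and by Harnack's inequality the finiteness of $\int_M G_P^M(\cdot,y)$ is independent of the base point $y$, so the existential statement in the corollary and the universal statement in the lemma are interchangeable; I would note this explicitly.

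There is no serious obstacle here, since the corollary is a repackaging of Lemma~\ref{liouville_lemma} via Fubini--Tonelli; the only point requiring minor care is justifying the interchange of integration, which is immediate from the nonnegativity of the heat kernel, and confirming that $G_P^M(\cdot,y_0)$ is locally integrable near its singularity at $y_0$ (used already in the proof of Lemma~\ref{liouville_lemma}), so that no mass is lost at the diagonal. I would therefore keep the proof to a few lines: state the Tonelli interchange, identify the double integral with $\int_M G_P^M(x,y_0)\,\mathrm{d}x$, and invoke Lemma~\ref{liouville_lemma} to conclude.
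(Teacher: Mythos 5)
Your proposal is correct and is exactly the argument the paper intends: the paper states Corollary~\ref{def1} with no written proof beyond the phrase ``In view of the above lemma,'' meaning precisely the combination of Lemma~\ref{liouville_lemma} with the identity $G_P^M(x,y_0)=\int_0^\infty k_P^M(x,y_0,t)\,\mathrm{d}t$ from \eqref{G_k} and a Tonelli interchange, which is what you spell out. Your additional remarks on the quantifier over $y_0$ and on local integrability near the singularity are sound and only make explicit what the paper leaves implicit.
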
 
\begin{corollary}\label{complete_imply_liouville}
	Let $P$ be a symmetric 
	 operator in $M$ of the form \eqref{P}. If $P$ is  stochastically complete, then $M$ is  $L^1$-Liouville with respect to $P$.
\end{corollary}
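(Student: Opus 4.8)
The plan is to combine the symmetry of the heat kernel of a self-adjoint operator with the integral characterization of the $L^1$-Liouville property established in Corollary~\ref{def1}. The key observation is that, since $P=P^*$, the associated minimal heat kernel is symmetric in its spatial variables: $k_P^M(x,y,t)=k_P^M(y,x,t)$ for all $x,y\in M$ and $t>0$. This is the only place where the symmetry hypothesis will be used, but it is essential.

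By Corollary~\ref{def1}, it suffices to find a point $y_0\in M$ for which
\[
\int_M\int_0^\infty k_P^M(x,y_0,t)\dt\dx=\infty.
\]
I would fix an arbitrary $y_0\in M$ and use the spatial symmetry to replace $k_P^M(x,y_0,t)$ by $k_P^M(y_0,x,t)$. Since the kernel is nonnegative by Lemma~\ref{properties}(2), Tonelli's theorem allows interchanging the order of integration, so that
\[
\int_M\int_0^\infty k_P^M(x,y_0,t)\dt\dx=\int_0^\infty\Big(\int_M k_P^M(y_0,x,t)\dx\Big)\dt.
\]
The argument then closes by invoking stochastic completeness: by Definition~\ref{defi1}, the inner integral equals $1$ for every $t>0$, whence the right-hand side is $\int_0^\infty 1\dt=\infty$. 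Corollary~\ref{def1} then delivers the $L^1$-Liouville property for $P$.

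The main (indeed essentially the only) subtle point is to justify the spatial symmetry $k_P^M(x,y,t)=k_P^M(y,x,t)$ from the hypothesis $P=P^*$, together with the bookkeeping of measures: one must ensure that the measure appearing in the definition of stochastic completeness is precisely the one with respect to which $P$ is formally self-adjoint, so that the heat semigroup $\mathcal{P}_t$ is genuinely symmetric. Granting this, the symmetry of the kernel is the standard duality identity $k_P^M(x,y,t)=k_{P^*}^M(y,x,t)$ specialized to $P=P^*$, and everything else reduces to the elementary Fubini--Tonelli computation above.
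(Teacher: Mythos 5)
Your proposal is correct and takes essentially the same route as the paper: Tonelli's theorem plus stochastic completeness applied to the spatially symmetric heat kernel, so that the double integral $\int_M\int_0^\infty k_P^M(x,y_0,t)\,\mathrm{d}t\,\mathrm{d}x$ diverges. The only differences are presentational: you invoke Corollary~\ref{def1} instead of Lemma~\ref{liouville_lemma} (which lets your single computation also absorb the critical case that the paper treats in a separate paragraph via Remark~\ref{rem-1.4}), and you make explicit the kernel symmetry $k_P^M(x,y,t)=k_P^M(y,x,t)$ coming from $P=P^*$, which the paper uses only implicitly when it writes $\int_M k_P^M(x,y,t)\,\mathrm{d}x=1$.
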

\begin{proof}
	First consider the case when $P$ is subcritical,  fix $y\in M$. By  Lemma~\ref{liouville_lemma}, it suffices to show that $G(\cdot,y)$ is not integrable.  Using Tonelli's theorem we get 
	\begin{align*}
		 \int_{M} G_{P}^{M}(x,y)\dx 
		  =\! \int_{M}\int_{0}^{\infty}k_{P}^{M}(x,y, t)  \dt\dx  
		  =\! \int_{0}^{\infty} \int_{M} k_{P}^{M}(x,y, t)  \dx \dt =\!\int_{0}^{\infty}\!\!\dt = \infty.
	\end{align*}

	On the other hand, by Remark~\ref{rem-1.4}, $P$ is critical in $M$ if and only if $P$ admits (up to a multiplicative constant) a unique positive supersolution. Therefore, since $P1=0$, it follows that in the critical case, $M$ is $L^1$-Liouville with respect to $P$.
\end{proof}
%%%%%%%%%%%%%%%%%%%%
We recall an elementary  lemma from \cite[Lemma~2.2]{YP1}.
\begin{lemma}\label{lem2}
	Let $P$ be an operator of the form \eqref{P} and $\rho$ a positive smooth function. Assume that $P$ is subcritical in $M$.  Consider the operator 
		$P_{\rho} = \rho P $ defined on $M$.  Then $P_{\rho}$ 
	is subcritical in $M$ and the minimal positive Green function $G^{M}_{P_{\rho}}$ of $P_{\rho}$ in $M$ is given by 
	\begin{equation}\label{modified_green}
		G^{M}_{P_{\rho}}(x, y) = \frac{G^{M}_{P}(x, y)}{\rho(y)}\, .
	\end{equation} 
\end{lemma}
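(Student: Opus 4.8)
The plan is to verify the explicit formula \eqref{modified_green} directly and to extract the subcriticality of $P_\rho$ as a by-product. The elementary but decisive observation is that multiplication by a positive function alters neither the solution nor the supersolution classes: since $\rho>0$ on $M$, for any $u\in C^2(M)$ one has $P_\rho u = \rho\, Pu$, so that $P_\rho u = 0$ if and only if $Pu = 0$, and $P_\rho u \ge 0$ if and only if $Pu\ge 0$. Consequently the cone $\mathcal{C}_{P_\rho}(M)$ of positive solutions and, more importantly, the cone of positive supersolutions of $P_\rho$ coincide with those of $P$.

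Next I would fix $y\in M$ and introduce the candidate $H(x) := G^{M}_{P}(x,y)/\rho(y)$, checking that it is a positive fundamental solution of $P_\rho$ with pole at $y$. Away from the pole the verification is immediate: since $P_x G^{M}_{P}(x,y) = 0$ for $x\neq y$ and $\rho(y)$ is a positive constant, we get $P_\rho H(x) = \rho(x)\,P_x H(x) = 0$ in $M\setminus\{y\}$, with $H>0$. For the singular behavior, recalling that $G^{M}_{P}(\cdot,y)$ solves $P_x G^{M}_{P}(x,y) = \delta_y(x)$ in the distributional sense (consistent with the heat-kernel representation \eqref{G_k}), I would compute
\[
P_\rho H(x) = \frac{\rho(x)}{\rho(y)}\,P_x G^{M}_{P}(x,y) = \frac{\rho(x)}{\rho(y)}\,\delta_y(x) = \delta_y(x),
\]
using that $\delta_y$ is supported at $x=y$, where the continuous factor $\rho(x)/\rho(y)$ equals $1$. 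Thus $H$ carries exactly the correct fundamental singularity of $P_\rho$ at $y$; the normalization by $\rho(y)$ (rather than $\rho(x)$) is dictated precisely by the fact that near the pole $\rho(x)\approx\rho(y)$, so that $P_\rho$ behaves there like the constant scaling $\rho(y)P$.

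To upgrade $H$ to the \emph{minimal} Green function and to deduce subcriticality simultaneously, I would pass through a compact smooth exhaustion $\{M_j\}$ of $M$. On each $M_j$ the Dirichlet Green function $G^{M_j}_{P_\rho}(\cdot,y)$ is the unique positive solution of $P_\rho u = \delta_y$ in $M_j$ vanishing on $\partial M_j$; since the Dirichlet (vanishing) boundary condition is unaffected by multiplication by $\rho$, the computation above shows that $G^{M_j}_{P}(\cdot,y)/\rho(y)$ solves this boundary value problem, whence by uniqueness $G^{M_j}_{P_\rho}(x,y) = G^{M_j}_{P}(x,y)/\rho(y)$. Letting $j\to\infty$ and using that the minimal Green functions are the increasing limits of the Dirichlet ones, we obtain
\[
G^{M}_{P_\rho}(x,y) = \lim_{j\to\infty}\frac{G^{M_j}_{P}(x,y)}{\rho(y)} = \frac{G^{M}_{P}(x,y)}{\rho(y)},
\]
which is finite because $P$ is subcritical; in particular $P_\rho$ admits a finite minimal positive Green function and is therefore subcritical. (Alternatively, subcriticality of $P_\rho$ follows at once from the coincidence of supersolution cones together with the criticality characterization recalled in Remark~\ref{rem-1.4}.)

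The only genuinely delicate point is the justification of the distributional identity $\frac{\rho(x)}{\rho(y)}\delta_y = \delta_y$ at the pole, which is what pins down the factor $\rho(y)$ and is where the continuity (indeed smoothness) of $\rho$ enters. Everything else is bookkeeping, and the exhaustion argument sidesteps any global subtlety by reducing to uniqueness of Dirichlet Green functions on relatively compact domains followed by a monotone passage to the limit.
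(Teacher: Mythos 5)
Your proof is correct, but note that there is no proof in the paper to compare it against: the authors simply recall Lemma~\ref{lem2} from \cite[Lemma~2.2]{YP1}, so your argument supplies what is left as a citation. Your route --- the distributional identity $\rho(x)\delta_y(x)=\rho(y)\delta_y(x)$, which pins down the factor $\rho(y)$ in \eqref{modified_green}, followed by uniqueness of the Dirichlet Green functions on a compact exhaustion $\{M_j\}$ and a monotone passage to the limit --- is the standard argument and is essentially what underlies the cited lemma. Two points you use implicitly are worth flagging. First, the paper defines $G_P^M$ via the heat-kernel formula \eqref{G_k}, whereas your argument works with the exhaustion characterization (the minimal Green function as the increasing limit of the Dirichlet Green functions $G_P^{M_j}$, finite precisely when the operator is subcritical); these two characterizations agree by standard theory, and indeed the paper itself invokes the exhaustion picture in the proof of Lemma~\ref{liouville_lemma}, but this identification is the bridge that turns your finiteness conclusion into subcriticality in the paper's sense. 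Second, uniqueness of the Dirichlet Green function on $M_j$ rests on the maximum principle for $P_\rho$ there, which holds because $P_\rho 1=0$ and $M_j\Subset M$ is a relatively compact proper subdomain, so that $\lambda_0(P_\rho,M_j)>0$. Your parenthetical alternative for subcriticality --- the positive supersolution cones of $P$ and $P_\rho$ coincide since $\rho>0$, $P_\rho$ is nonnegative since $P_\rho 1=0$, and criticality is characterized by uniqueness of the positive supersolution as in Remark~\ref{rem-1.4} --- is also valid, and is arguably the cleanest way to obtain subcriticality of $P_\rho$ without any limiting argument.
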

%%%%%%%%%
\section{Proof of the main theorem}\label{sec4}
This section is devoted to the proof of Theorem~\ref{main_theorem}.  The proof is essentially based on finding a ``test function"  $u \in C^{2}(M)$ which satisfies the hypothesis of 
Remark~\ref{stoch_incomplete}.  An alternative method involving optimal Hardy weights is presented in Section~\ref{sec_optimal-Hardy}. 
\begin{proof}[Proof of Theorem~\ref{main_theorem}] 	
	(1) Fix $o\in M$ and a positive  Radon measure $\gm$ on $M$ with a smooth positive density $\gm$.
	We assume that the corresponding {\em Green potential} $G_\gm$ is finite. That is, we assume that for some $x\in M$ (and therefore, for any $x\in M$) 
	\begin{equation}\label{eq_finite_pot}
	G_\gm(x):= \int_{M}  \Gree{M}{P}{x}{y} \gm(y)\dy<\infty.
	\end{equation}
Hence, $PG_\gm=\gm$. Denote by $G(x)=\Gree{M}{P}{x}{o}$. By the minimality of $G$,  it follows that for $\vge>0$ small enough there exists $C_\vge>0$ such that  $G(x)\leq C_\vge G_\gm(x)$ in $M\setminus B(o,\vge)$. 

We further assume that $G_\gm(x)\leq  C_1 G(x)$ in $M$. In other words, we have 
\begin{equation}\label{GasimGvgf}
G_\gm \asymp G \qquad  \mbox{in }  M\setminus B(o,\vge). 
\end{equation} 
In particular,  \eqref{eq-G0} and \eqref{GasimGvgf} imply that $G_\gm$ is bounded in $M$ and  
$$
\lim_{x \rightarrow \bar\infty}  G_{\mu} (x) =0. 
$$
For necessary and sufficient conditions for the validity of \eqref{GasimGvgf} see \cite[Lemma~6.1]{GP} and references therein. 

	Set $\rho:=1/\gm$, and consider the operator $P_{\rho} = \gr P $. Lemma~\ref{lem2} implies that $P_{\rho}$ is subcritical in $M$ with the minimal positive Green function 
	$$\Gree{M}{P_\gr}{x}{y}=\frac{\Gree{M}{P}{x}{y}}{\gr(y)}\,. $$
 
	In view of Remark~\ref{stoch_incomplete}, in order to prove stochastic incompleteness of $P_\gr$ in $M$, it is sufficient to construct a function $u \in C^{2} (M)$ with $ u^{*} := \sup_{M} u < \infty$ such that for any sequence $\{ x_{n} \} \in M$ satisfying $\lim_{n \rightarrow \infty} u(x_{n}) = u^{*}$,	we have 
		$$\limsup_{n \rightarrow \infty} (-P_\gr) u(x_{n}) > 0.$$

Let $u:=-G_\gm$. Then clearly $u<0$ and $u$ satisfies $u(x) \rightarrow 0$ as $x \rightarrow \bar \infty$, where $\bar\infty$ is the ideal point of the one-point compactification of $M$. 	Hence, $ u^{*} := \sup_{M} u =0 < \infty$ and a sequence  
$\{ x_n \} \subset M$ satisfies $\lim_{n\to\infty}u^*(x_n)=u^*=0$ if and only if $x_n \to\bar \infty$. On the other hand, since $(-P_{\rho})u=1$, it follows that for any sequence $\{ x_n \}$ we have
$$\limsup_{n \rightarrow \infty} (-P_{\rho})u(x_{n}) = 1 > 0.$$
Therefore, 	$M$ is stochastic incomplete with respect to $P_\gr$\,.

\medskip

	(2) We need to prove that  $M$ is  $L^1$-Liouville with respect to the operator $P_{\rho} := \rho P $ if and only if $P$ is $L^1$-Liouville with respect to $P$.   In view of Lemma~\ref{liouville_lemma}
	 this is equivalent to the non-integrability of the  Green function $\Gree{M}{P_\gr}{x}{y}$ as a function of $x$.
	Indeed, 
	$$\hspace{2cm} \int_{M}  \Gree{M}{P_\gr}{x}{y}\dx= \int_{M}\  \frac{\Gree{M}{P}{x}{y}}{\gr(y)}\dx= \gm(y) \int_{M} G^{M}_{P}(x, y) \, {\rm d}x. $$
	For fixed $y,$ the conclusion follows immediately. 
	\end{proof}
\begin{remark}{\em
In light of the above proof, it follows that Theorem~\ref{main_theorem} holds true for any $\gr=1/\gm$ such that $\lim_{x \rightarrow \bar\infty}  G_{\mu} (x) =0$ (without assuming \eqref{GasimGvgf}).	
}\end{remark}	
\begin{remark}{\em
 Let $W_\gm:= \gm/G_\gm$. By \cite[Lemma~6.2]{GP}, if \eqref{GasimGvgf} holds true, then the operator $P-W_\gm$ is positive-critical
in $M$ with respect to the Hardy-weight $W_\gm$ and  $G_\gm$ is its ground state.
}\end{remark}	
%%%%%%%%%%%%%%%%%%%%%%%%%%%%%%%%%%%%%%%%%%%%%%%%%%%%%%%%%%%%%%%%%%%%%%
%%%%%%%%%%%%%%%%%
\section{Skew product operators and  stochastically incompleteness}\label{sec5}
This section is devoted to the study of the skew product of operators $P_i$ of the form \eqref{P}, defined on smooth, noncompact, and connected manifolds $M_i$, $i=1,2$, and its relations to the stochastic completeness and $L^1$-Liouville properties of the operators $P_i$.  Let us recall the definition of
skew product (see \cite{MM1, MM2, MM3}). 
\begin{definition}\label{defi3}{\em 
Let $M = M_1 \times M_2$ be the Cartesian product of two manifolds $M_1$ and $M_2$, and denote a point $x$ in $M$ by $x = (x_1, x_2) \in M = M_1 \times M_2.$ The {\em skew product operator}
 $P$ of operators $P_i$ of the form \eqref{P} defined on $M_i$, $i=1,2$, is given by  
\begin{equation}\label{eq_skew}
P := P_1 \otimes I_2 + I_1 \otimes P_2,
\end{equation}
where $I_i$ is the identity map on $M_i$. }
\end{definition}

Next, we recall a well known lemma on the heat kernel of a skew product operator. For completeness, we provide the proof. 
\begin{lemma}\label{subcritical_skew}
 Let $P_1$  and $P_2$ be nonnegative operators of the form \eqref{P} defined on $M_1$ and $M_2$, respectively. Let $k_{P_1}^{M_1}$ and $k_{P_2}^{M_2}$ be the  minimal positive heat  kernels of $P_1$ and $P_2$, respectively. 
 Then the minimal positive heat kernel $k^M_P$ of the skew product operator $P$  on $M=M_1\times M_2$   satisfies 
 \begin{equation}\label{sph}
		k_P^M(x, y, t) = k_{P_1}^{M_1}(x_1, y_1, t)\:k_{P_2}^{M_2}(x_2, y_2, t), 
 \end{equation}
 where $x = (x_1, x_2),\: y = (y_1, y_2) \in M.$ Furthermore, if at least one of the above operators is subcritical,  
 then $P$ is subcritical in $M$.
 \end{lemma}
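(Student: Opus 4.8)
The plan is to establish the factorization \eqref{sph} by an exhaustion argument and then deduce subcriticality from a positive-supersolution criterion, thereby avoiding any pointwise heat-kernel estimates (which would be delicate for non-symmetric operators).

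First I would prove the product formula. Fix exhaustions $\{\Omega_i^{(j)}\}_{j}$ of $M_i$ by relatively compact open subsets, $i=1,2$, and set $\Omega^{(j)} := \Omega_1^{(j)} \times \Omega_2^{(j)}$, so that $\{\Omega^{(j)}\}$ exhausts $M$. On each $\Omega^{(j)}$ the Dirichlet heat kernel $k_P^{\Omega^{(j)}}$ is the unique positive solution of $(\partial_t + P)w = 0$ with initial trace $\delta_y$ that vanishes on the lateral boundary. I claim it equals $w := k_{P_1}^{\Omega_1^{(j)}}(x_1,y_1,t)\,k_{P_2}^{\Omega_2^{(j)}}(x_2,y_2,t)$. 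Indeed, since $P_1$ (resp. $P_2$) differentiates only in the $x_1$ (resp. $x_2$) variables, the decomposition \eqref{eq_skew} together with $(\partial_t + P_i)k_{P_i}^{\Omega_i^{(j)}}=0$ gives $(\partial_t + P)w = 0$; the initial trace of $w$ is $\delta_{y_1}\otimes\delta_{y_2}=\delta_y$; and $w$ vanishes on $\partial\Omega^{(j)}\times(0,\infty)$ because on each face of the product boundary one of the two factors vanishes. By uniqueness of the Dirichlet heat kernel on the relatively compact domain $\Omega^{(j)}$ (a consequence of the parabolic maximum principle), the asserted equality holds on $\Omega^{(j)}$. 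Letting $j\to\infty$, the three Dirichlet heat kernels increase monotonically to the corresponding minimal heat kernels $k_P^M$, $k_{P_1}^{M_1}$, $k_{P_2}^{M_2}$, and monotone convergence yields \eqref{sph}.

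For the subcriticality claim, assume without loss of generality that $P_1$ is subcritical. Rather than integrating the product in \eqref{sph} over $t$ — which does not factor and would require decay estimates on $k_{P_2}^{M_2}$ — I would invoke the standard characterization from criticality theory that a nonnegative operator is subcritical if and only if it admits a positive supersolution that is not a solution. Choose a nonzero nonnegative density $\mu_1 \in C_c^\infty(M_1)$ and let $u_1 := \int_{M_1} G_{P_1}^{M_1}(\cdot, z)\mu_1(z)\,\mathrm{d}z$ be the corresponding Green potential, which is finite, positive and of class $C^2$ on $M_1$ and satisfies $P_1 u_1 = \mu_1$. Regard $u_1$ as a function on $M = M_1\times M_2$ that is independent of $x_2$. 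Since $P_2 1=0$ and $u_1$ does not depend on $x_2$, we have $P_2 u_1 = 0$, whence by \eqref{eq_skew} we get $Pu_1 = P_1 u_1 = \mu_1 \gneq 0$. Thus $u_1$ is a positive supersolution of $P$ on $M$ that is not a solution, so $P$ is subcritical in $M$; by symmetry the same conclusion follows if instead $P_2$ is subcritical.

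The main obstacle is the heat-kernel factorization, specifically justifying the product formula for the Dirichlet heat kernels on the product domains $\Omega^{(j)}$: one must ensure that the uniqueness of the Dirichlet heat kernel applies despite the corners of $\Omega^{(j)}$ (which is only a Lipschitz domain), and that the initial trace of the product is genuinely $\delta_y$. Both are handled by the parabolic maximum principle and the approximate-identity property of each factor, but they are the points requiring care. The subcriticality step, by contrast, is immediate once the supersolution criterion is available, and it has the advantage of requiring neither symmetry nor Gaussian bounds.
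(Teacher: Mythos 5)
Your proposal is correct, but it diverges from the paper's proof in the part that actually carries content, so a comparison is in order. For the product formula \eqref{sph} the paper simply asserts that it ``follows immediately from the definition of skew product,'' whereas you supply a genuine proof by exhaustion, Dirichlet-kernel uniqueness on the product domains, and monotone convergence; this fills in details the paper omits (and your flagged concerns --- corners of the Lipschitz product domains, the initial trace $\delta_{y_1}\otimes\delta_{y_2}$ --- are exactly the right ones, both handled by the parabolic maximum principle). For subcriticality the two arguments are genuinely different. The paper stays entirely inside Definition~\ref{def_crit}: assuming $P_2$ subcritical, it integrates \eqref{sph} in $t$ \emph{and} in $y_1$, uses Tonelli together with the sub-Markov bound $\int_{M_1}k_{P_1}^{M_1}(x_1,y_1,t)\,\mathrm{d}y_1\le 1$ (minimality plus $P_11=0$) to get
\begin{equation*}
\int_{M_1}\!\int_{0}^{\infty}k_{P_1}^{M_1}(x_1,y_1,t)\,k_{P_2}^{M_2}(x_2,y_2,t)\,\mathrm{d}t\,\mathrm{d}y_1\;\le\;\int_{0}^{\infty}k_{P_2}^{M_2}(x_2,y_2,t)\,\mathrm{d}t<\infty,
\end{equation*}
and concludes by Fubini that $\int_0^\infty k_P^M(x,y,t)\,\mathrm{d}t<\infty$ for a.e.\ $y_1$, which suffices by the ``for some, hence for all'' dichotomy. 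In particular, your stated reason for avoiding this route --- that it ``would require decay estimates on $k_{P_2}^{M_2}$'' --- is not accurate: the extra $y_1$-integration is precisely the trick that makes decay estimates unnecessary. Your alternative, lifting the Green potential $u_1=\int_{M_1}G_{P_1}^{M_1}(\cdot,z)\mu_1(z)\,\mathrm{d}z$ to $M$ and observing $Pu_1=\mu_1\gneq 0$, is nevertheless valid: since $P1=0$ on the product, $P\ge 0$, and a nonnegative operator admitting a positive supersolution which is not a solution must be subcritical (this is the dichotomy behind Remark~\ref{rem-1.4}, see \cite{YP2,RP}). What each approach buys: the paper's argument uses only the heat-kernel definition of subcriticality and the minimality of $k_{P_1}^{M_1}$, so it is self-contained within the paper's framework; yours invokes the (standard but nontrivial) equivalence between the heat-kernel and positive-supersolution characterizations of subcriticality, in exchange for a softer, more conceptual argument that produces an explicit supersolution and avoids all integral manipulations of the kernels.
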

%%%%%%%%%%%%%%%%
\begin{proof}	
The product formula \eqref{sph} follows immediately from the definition of skew product. 
	In order to prove subcriticality, we need to prove that for some $x\neq y$, (and therefore for any $x\neq y$), $x,y\in M$,
	\begin{equation*}
				\int_{0}^{\infty}k_P^M(x,y,t) \dt
		= \int_{0}^{\infty}k_{P_1}^{M_1}(x_1, y_1, t)\:k_{P_2}^{M_2}(x_2, y_2, t) \dt<\infty.
	\end{equation*}
	Without loss of generality, we may assume that $P_2$ is subcritical in $M_2$.
	By Fubini's theorem it is enough to prove that 
	\begin{equation*}
		\int_{M_1}\int_{0}^{\infty}k_{P_1}^{M_1}(x_1, y_1, t)\:k_{P_2}^{M_2}(x_2, y_2, t) \dt\:\dy_1<\infty.
	\end{equation*}
	By the minimality of the heat kernel, it follows that  
	\begin{equation*}
		\int_{M_1}k_{P_1}^{M_1}(x_1,y_1,t)\dy_1 \leq 1 \: \text{ for some (and hence for all) }\  (x_1,t)\in M_1\times(0,\infty).
	\end{equation*}
	Therefore, by Tonelli's theorem and the subcriticality of $P_2$ in $M_2$,  we obtain,
	\begin{align*}
		 \int_{M_1}\int_{0}^{\infty}k_{P_1}^{M_1}(x_1, y_1, t)\:k_{P_2}^{M_2}(x_2, y_2, t)\dt\:\dy_1 
		 \leq 1\cdot\int_{0}^{\infty}k_{P_2}^{M_2}(x_2, y_2, t)\dt <\infty. \qquad \qedhere
	\end{align*}
\end{proof}
%%%%%%%%%%%%%%
The next result leads us to the question of stochastic (in)completeness of a skew product operator. 
\begin{lemma}\label{one_si}
	Let $P_1$  and $P_2$ be operators of the form \eqref{P} defined on $M_1$ and $M_2$, respectively. Then the manifold $M=M_1\times M_2$ is 
	 stochastically incomplete with respect to the skew product operator $P$ if at least one of the  manifolds $M_i$ is stochastically incomplete with respect to $P_i$.
\end{lemma}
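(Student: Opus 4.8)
The plan is to reduce the statement to the product formula \eqref{sph} for the heat kernel and then integrate. Suppose, without loss of generality, that $M_1$ is stochastically incomplete with respect to $P_1$. By Definition~\ref{defi1}, this means that for every (equivalently, for some) $(x_1,t)\in M_1\times(0,\infty)$ we have
\begin{equation*}
\int_{M_1} k_{P_1}^{M_1}(x_1,y_1,t)\dy_1 < 1.
\end{equation*}
I would fix an arbitrary point $x=(x_1,x_2)\in M=M_1\times M_2$ and an arbitrary time $t>0$, and aim to show that the total heat mass of $k_P^M$ is strictly less than $1$.

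First I would invoke Lemma~\ref{subcritical_skew} to write $k_P^M(x,y,t)=k_{P_1}^{M_1}(x_1,y_1,t)\,k_{P_2}^{M_2}(x_2,y_2,t)$, which is a nonnegative measurable function by property $(2)$ of Lemma~\ref{properties}. Then, integrating over $M=M_1\times M_2$ and applying Tonelli's theorem (justified by nonnegativity), the double integral factors:
\begin{equation*}
\int_M k_P^M(x,y,t)\dy = \left(\int_{M_1} k_{P_1}^{M_1}(x_1,y_1,t)\dy_1\right)\left(\int_{M_2} k_{P_2}^{M_2}(x_2,y_2,t)\dy_2\right).
\end{equation*}
By the minimality of the heat kernel, each factor is at most $1$. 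The first factor is strictly less than $1$ by the stochastic incompleteness of $M_1$, while the second is bounded above by $1$. Hence the product is strictly less than $1$, which is exactly the stochastic incompleteness of $M$ with respect to $P$.

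The only point requiring a small amount of care — and the closest thing to an obstacle — is that the stochastic incompleteness hypothesis and the desired conclusion each hold for \emph{some} point and time if and only if for \emph{all} of them, as recorded in Definition~\ref{defi1}. Since the factorization above is valid at every $(x,t)$ and the first factor is strictly subunital at the fixed $(x_1,t)$, the strict inequality propagates to $M$ at the corresponding point, and the ``for all'' equivalence in Definition~\ref{defi1} then yields the conclusion globally. The symmetric case where $M_2$ is the stochastically incomplete factor is identical after interchanging the roles of the indices, so no separate argument is needed.
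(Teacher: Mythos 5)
Your proof is correct and is essentially identical to the paper's: both factor $k_P^M$ via the product formula \eqref{sph}, apply Tonelli's theorem, bound the second factor by $1$ using minimality of the heat kernel, and conclude from the strict inequality in the first factor. The paper's only additional remark is that an alternative proof could be given via Remark~\ref{stoch_incomplete}, which you do not need.
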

\begin{proof}
	Without loss of generality, we may assume that $M_1$ is stochastically incomplete manifold with respect to the operator $P_1$, and hence for all $(x_1, t) \in M_1 \times (0, \infty)$ there holds 
		\begin{equation}\label{EL5.4}
			\int_{M_1} k_{ P_1}^{M_1}(x_1, y_1, t) \,\mathrm{d}y_1 < 1.
		\end{equation}	
	Now for $x = (x_1, x_2),\: y = (y_1, y_2) \in M$, by Tonelli's theorem we have
\begin{align*}
	\int_{M} k_{ P}^{M}(x, y, t) \,\mathrm{d}y&= \left(\int_{M_1}k_{P_1}^{M_1}(x_1, y_1, t)\dy_1\right)\!\!
	\left(\int_{M_2}\:k_{P_2}^{M_2}(x_2, y_2, t)\dy_2\right)\\
	&\leq \int_{M_1}k_{P_1}^{M_1}(x_1, y_1, t)\dy_1 < 1.
\end{align*}
	Another  simple alternative proof can be easily derived  using Remark~\ref{stoch_incomplete}. 
\end{proof}
The following result concerns the interplay between stochastic (in)completeness and 
the $L^1$-Liouville property of a skew product operator of the form \eqref{eq_skew}.
\begin{theorem}\label{thm_skew}
Let $P_1$  and $P_2$ be operators of the form \eqref{P} defined on $M_1$ and $M_2$, respectively. Let $P := P_1 \otimes I_2 + I_1 \otimes P_2$ 
	defined  on $M := M_1 \times M_2$ be the corresponding skew product operator. Then the following assertions hold true: 
	\begin{enumerate} 
		
		\item If at least one of $M_i,$  $i = 1, 2$ is not $L^1$-Liouville with respect to $P_i$, and the other operator is symmetric, then the skew product operator 
		$P$ on $M$ is not $L^1$-Liouville.  
		
		\medskip 
		
		\item If one of the $M_i,$ $i =1, 2$ is $L^1$-Liouville with respect to $P_i$ and the other is symmetric, then the product manifold $M$ is $L^1$-Liouville with respect to $P$. 
		
		\medskip 
		
		\item Assume that both $M_i$,   $i = 1, 2$, are stochastically complete with respect to $P_i$, then the product manifold $M$ is stochastically complete with respect to $P$. 
		Moreover, if $P$ is symmetric in $M$, then $M$ is $L^1$-Liouville with respect to $P$. 
		
		\medskip 
		
		\item  If one of the $M_i,$ $i =1, 2$ is stochastically incomplete with respect to $P_i$, then $M$ is stochastically incomplete with respect to $P$.
	\end{enumerate}
\end{theorem}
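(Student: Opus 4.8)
The plan is to prove each of the four assertions of Theorem~\ref{thm_skew} by reducing to the key lemmas already established, using the product formula \eqref{sph} as the main computational tool. The overall strategy exploits the tensor structure: the heat kernel of the skew product factors, so integrals over $M=M_1\times M_2$ split into products of integrals over $M_1$ and $M_2$ via Tonelli's theorem, and the Green function of $P$ likewise decomposes through the time integral of \eqref{sph}.

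For assertion $(4)$ there is nothing to do, as it is exactly Lemma~\ref{one_si}. For assertions $(1)$ and $(2)$, concerning the $L^1$-Liouville property, the plan is to invoke Lemma~\ref{liouville_lemma}, which reduces the $L^1$-Liouville property to the (non-)integrability of the minimal Green function. First I would write, for $x=(x_1,x_2)$ and a fixed $y=(y_1,y_2)\in M$,
\begin{align*}
\int_M G_P^M(x,y)\dx &= \int_{M_1}\int_{M_2}\int_0^\infty k_{P_1}^{M_1}(x_1,y_1,t)\,k_{P_2}^{M_2}(x_2,y_2,t)\dt\dx_2\dx_1.
\end{align*}
Assuming, say, $P_2$ is symmetric and stochastically complete (the symmetry hypothesis guarantees $\int_{M_2}k_{P_2}^{M_2}(x_2,y_2,t)\dx_2=1$ when $M_2$ is stochastically complete, since symmetry lets us integrate in the $x_2$ rather than $y_2$ variable), one integrates first in $x_2$ to collapse the $M_2$-factor to $1$, leaving $\int_{M_1}G_{P_1}^{M_1}(x_1,y_1)\dx_1$. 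Then assertion $(1)$ follows because non-$L^1$-Liouville of $M_1$ gives finiteness of this integral by Lemma~\ref{liouville_lemma}, whence $G_P^M(\cdot,y)\in L^1(M)$ and $M$ is not $L^1$-Liouville; assertion $(2)$ follows by contraposition, since $L^1$-Liouville of $M_1$ forces the integral to diverge. The symmetry assumption on the other factor is what makes the $M_2$-integral equal exactly $1$ rather than merely being bounded, and this is the delicate point to state carefully.

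For assertion $(3)$, the stochastic completeness of $M$ follows directly from the product formula: by Tonelli's theorem,
\begin{align*}
\int_M k_P^M(x,y,t)\dy &= \left(\int_{M_1}k_{P_1}^{M_1}(x_1,y_1,t)\dy_1\right)\!\left(\int_{M_2}k_{P_2}^{M_2}(x_2,y_2,t)\dy_2\right)=1\cdot 1=1,
\end{align*}
using that each factor equals $1$ by stochastic completeness of $M_i$ with respect to $P_i$. The second part of $(3)$, that symmetry of $P$ implies $L^1$-Liouville, is then immediate from Corollary~\ref{complete_imply_liouville} applied to the now-established stochastically complete symmetric operator $P$ on $M$.

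The step I expect to be the main obstacle is the careful use of the symmetry hypothesis in assertions $(1)$ and $(2)$. The issue is that stochastic completeness of $M_i$ is by definition the statement $\int_{M_i}k_{P_i}^{M_i}(x_i,y_i,t)\dy_i=1$, an integral over the \emph{second} variable, whereas the Green-function computation above requires integrating over the \emph{first} variable. These coincide precisely when $P_i$ is symmetric, because then $k_{P_i}^{M_i}(x_i,y_i,t)=k_{P_i}^{M_i}(y_i,x_i,t)$. One must also verify that the relevant operator (say $P_2$) is in fact stochastically complete under the standing subcriticality hypotheses, or otherwise argue that the $M_2$-integral equals $1$ directly from symmetry; I would handle this by noting that a symmetric subcritical operator with $P_21=0$ has $\int_{M_2}k_{P_2}^{M_2}(x_2,y_2,t)\dy_2\le 1$ and using symmetry together with Corollary~\ref{complete_imply_liouville} or a direct Tonelli argument to pin the value. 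Assembling these observations cleanly, and making sure the order of integration and the symmetry swap are justified by Tonelli (all integrands nonnegative), is the crux of the argument.
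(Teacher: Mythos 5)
Your treatment of parts (3) and (4) is correct and coincides with the paper's proof: (4) is exactly Lemma~\ref{one_si}, and (3) is the same Tonelli computation using \eqref{sph} followed by Corollary~\ref{complete_imply_liouville}. Part (1) is also essentially the paper's argument (Lemma~\ref{liouville_lemma} plus splitting the Green-function integral), except that you invoke stochastic completeness of the symmetric factor to make its integral equal $1$. That extra hypothesis is not granted by the theorem and is also not needed for (1): finiteness of $\int_M G_P^M(x,y)\dx$ only requires the upper bound
\begin{equation*}
\int_{M_2}k_{P_2}^{M_2}(x_2,y_2,t)\dx_2=\int_{M_2}k_{P_2}^{M_2}(y_2,x_2,t)\dx_2\leq 1,
\end{equation*}
which follows from symmetry together with the sub-Markov property $\int_{M_2}k_{P_2}^{M_2}(y_2,x_2,t)\dx_2\leq 1$ coming from minimality; this bound is precisely what the paper uses. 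So (1) is fine after this trivial repair.

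The genuine gap is in part (2), exactly at the point you flag as the crux. Your contraposition needs the factor $\int_{M_2}k_{P_2}^{M_2}(x_2,y_2,t)\dx_2$ to be bounded \emph{below} by a positive constant uniformly in $t$; symmetry alone gives only the upper bound, and your fallback plan to ``pin the value'' to $1$ cannot work, because a symmetric subcritical operator with $P_21=0$ need not be stochastically complete. Concretely, let $M_2$ be a bounded smooth domain in $\R^n$ and $P_2=-\Delta$; then $k_{P_2}^{M_2}$ is the Dirichlet heat kernel and $\int_{M_2}k_{P_2}^{M_2}(x_2,y_2,t)\dx_2\leq Ce^{-\lambda_1 t}\rightarrow 0$ as $t\to\infty$, where $\lambda_1>0$ is the first Dirichlet eigenvalue. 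With such an $M_2$ and, say, $M_1=\R^3$, $P_1=-\Delta$ (which is $L^1$-Liouville), one gets
\begin{equation*}
\int_{M} G_{P}^{M}(x,y)\dx=\int_0^\infty\!\left(\int_{M_1}k_{P_1}^{M_1}(x_1,y_1,t)\dx_1\right)\!\left(\int_{M_2}k_{P_2}^{M_2}(x_2,y_2,t)\dx_2\right)\dt\leq\int_0^\infty \min\{1,Ce^{-\lambda_1 t}\}\dt<\infty,
\end{equation*}
so by Lemma~\ref{liouville_lemma} the product is \emph{not} $L^1$-Liouville, and the desired divergence fails. Thus as written your proof of (2) establishes only the weaker statement in which the symmetric factor is additionally assumed stochastically complete. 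You should know that the paper's own proof of (2) makes the very same unjustified inference (it asserts the conclusion ``using Fubini's theorem and the symmetricity of $P_2$'' with no lower bound on the $M_2$-factor), and the example above indicates that assertion (2) actually requires an additional hypothesis of the type you introduced; so your difficulty is not an artifact of your approach, but it remains a gap relative to the statement being proved.
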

\begin{proof}
	$(1)$ Without loss of generality, we assume that $M_1$ is not $L^1$-Liouville with respect to $P_1$ and $P_2$ is symmetric. Then
		\begin{align*}
	& \int_{M} G_{P}^{M}(x_1,x_2,y_1,y_2)\dx_1\dx_2 
	 = \int_{0}^{\infty}\int_{M_1}\int_{M_2}k_{P_1}^{M_1}(x_1, y_1, t)\:k_{P_2}^{M_2}(x_2, y_2, t) \dx_2\dx_1 \dt\\
	&=\int_{0}^{\infty}\int_{M_1}k_{P_1}^{M_1}(x_1, y_1, t) \dx_1 \left(\int_{M_2}k_{P_2}^{M_2}(x_2, y_2, t) \dx_2\right)\dt\\
	&\leq \int_{M_1}\int_{0}^{\infty}k_{P_1}^{M_1}(x_1,y_1,t)\dt\dx_1<\infty.
	\end{align*}
	Hence,  $M$ is  not $L^1$-Liouville with respect to $P$.
	
	\medskip

	$(2)$ Without loss of generality, we assume that $M_1$ is $L^1$-Liouville with respect to $P_1$ and $P_2$ is symmetric. Let $(y_1,y_2)$ be any point on $M_1 \times M_2$ 
	and assume $P$ is not $L^1$-Liouville, i.e., 
	$$\int_{M_1}\int_{M_2}\int_{0}^{\infty}k_{P_1}^{M_1}(x_1, y_1, t)\:k_{P_2}^{M_2}(x_2, y_2, t) \dt \dx_1\dx_2<\infty.$$
	Then using Fubini's theorem and the symmetricity of $P_2$, we conclude,  $$\int_{M_1}\int_{0}^{\infty}k_{P_1}^{M_1}(x_1, y_1, t) \dt \dx_1<\infty,$$
	and hence $M_1$ is not $L^1$-Liouville with respect to $P_1$ which is a contradiction. Therefore, 
	$M$ is $L^1$-Liouville with respect to $P$. 
	
	\medskip
	
	$(3)$  Since  both $M_i$, $i =1, 2$ are stochastically complete with respect to $P_i$, therefore, we have for $i =1, 2,$
	\begin{equation*}
		\int_{M_i}k_{P_i}^{M_i}(x_i,y_i,t)\dy_i = 1 \text{ for any } (x_i,t)\in M_i\times (0,\infty).
	\end{equation*}
	Using Tonelli's theorem we obtain 
	\begin{align*}
		 \int_{M} k_{P}^{M}(x_1,x_2,y_1,y_2,t)\dy_1\dy_2
		= \int_{M_1}\int_{M_2}k_{P_1}^{M_1}(x_1, y_1, t)\:k_{P_2}^{M_2}(x_2, y_2, t) \dy_2\dy_1 = 1.
	\end{align*}
	So, $M$ is stochastically complete with respect to $P$. Furthermore, under the symmetricity assumption, Corollary~\ref{complete_imply_liouville} implies that $P$  is $L^1$-Liouville as well. 

		\medskip 
	
	$(4)$ By Lemma~\ref{one_si}, $M$ is stochastically incomplete with respect to $P$.
\end{proof}

The following result concerns a case where the $L^1$-Liouville property holds for the skew product operator $P$. 
\begin{corollary}
	If one of the $P_i$, $i =1, 2$ is symmetric
		in $M_i$ and the other is critical, then the product manifold $M$ is 
		$L^1$-Liouville with respect to $P$.
\end{corollary}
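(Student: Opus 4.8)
The plan is to deduce this corollary directly from part~(2) of Theorem~\ref{thm_skew}, using Remark~\ref{rem-1.4} as the bridge that converts the criticality hypothesis into an $L^1$-Liouville hypothesis. Since the two operators play asymmetric roles in Theorem~\ref{thm_skew}(2) — one must be $L^1$-Liouville and the other must be symmetric — the key observation is that a critical operator of the form \eqref{P} is automatically $L^1$-Liouville, so the critical factor supplies the $L^1$-Liouville hypothesis while the symmetric factor supplies the symmetry hypothesis.

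First I would fix notation without loss of generality, say $P_2$ is critical in $M_2$ and $P_1$ is symmetric in $M_1$ (the roles are interchangeable). Since every operator of the form \eqref{P} satisfies $P_21=0$, Remark~\ref{rem-1.4} applies and tells us that criticality of $P_2$ forces $M_2$ to be $L^1$-Liouville with respect to $P_2$ (indeed, a critical operator admits, up to a multiplicative constant, a unique positive supersolution, namely the constant function, so there is no nonconstant nonnegative $L^1$-supersolution). This is the one genuinely nontrivial input, but it is already recorded in Remark~\ref{rem-1.4}, so the step is immediate.

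Next I would verify that the hypotheses of Theorem~\ref{thm_skew}(2) are met verbatim: one of the factors, namely $M_2$, is $L^1$-Liouville with respect to $P_2$, and the other factor operator, $P_1$, is symmetric. Invoking Theorem~\ref{thm_skew}(2) then yields that the product manifold $M=M_1\times M_2$ is $L^1$-Liouville with respect to the skew product operator $P$, which is exactly the assertion. Should one prefer $P_1$ critical instead, the argument is identical after interchanging the indices.

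The step I expect to require the most care is making sure that the underlying characterization behind Theorem~\ref{thm_skew}(2) is applicable, i.e.\ that the skew product $P$ is well-behaved enough (in particular subcritical, via Lemma~\ref{subcritical_skew}) for Lemma~\ref{liouville_lemma} and Corollary~\ref{def1} to be used; this is guaranteed as soon as at least one factor is subcritical, and otherwise the critical case is covered by the $L^1$-Liouville conclusion of Remark~\ref{rem-1.4} applied to $P$ itself. Beyond this bookkeeping, the proof is a short reduction and involves no new computation.
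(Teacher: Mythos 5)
Your proposal is correct and takes essentially the same route as the paper: the criticality of one factor, combined with $P_i1=0$ (Remark~\ref{rem-1.4}), makes that factor $L^1$-Liouville, and then part~(2) of Theorem~\ref{thm_skew} gives the conclusion. The extra bookkeeping you add about subcriticality of the skew product is harmless but unnecessary, since it is already subsumed in the proof of Theorem~\ref{thm_skew}(2).
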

\begin{proof}
	Since a critical operator of the form \eqref{P} is $L^1$-Liouville, the corollary follows from part $(2)$ of Theorem~\ref{thm_skew}. 
\end{proof}
\begin{remark} {\em 
	We recall that in \cite{GR2,YP1} the authors present a counterexample to a conjecture of A.~Grigor'yan of  a skew product manifold $M\times K$ that does not satisfy  the $L^\infty$-Liouville property, where $(M,g)$ is a complete Riemannian manifold 
	(with $\gl_0(-\Gd_g, M) = 0$) satisfying the $L^\infty$-Liouville property in $(M,g)$, and $K$ is any compact  Riemannian manifold.
}
\end{remark}
%%%%%%%%%%%%%%%%%%%%%%%%%%%%%%%%%%%%%
\section{Optimal Hardy weight and Stochastically incomplete}\label{sec_optimal-Hardy}
In the present section, we give an alternative proof of Theorem~\ref{main_theorem} under an additional assumption. 
\begin{proof}[Alternative proof of Theorem~\ref{main_theorem}] 	

{\bf Step 1 :}  First we recall the main result of Frass, Devyver and Pinchover \cite[Theorem~4.12]{DFP} (also see \cite{PI}): Let $P$ be a subcritical operator of the form 
\eqref{P} in $M,$ and let $G^M_{P}$ be its minimal positive Green function. Assume that $P$ and $G^M_{P}$ satisfy all the hypotheses of Theorem~\ref{main_theorem}.  Let $0 \leq \varphi \in C_c^{\infty}(M), \vgf\neq 0$, and consider the Green 
potential given by 
$$
G_{\varphi}^M (x) = \int_{M}  G^M_{P} (x, y) \varphi(y) \, {\rm d}y.
$$ 
Then $G_{\varphi}^M$ satisfies 
$$
P(\sqrt{G_{\varphi}^M}) = W \sqrt{G_{\varphi}^M} \geq 0,
$$
where $W := \dfrac{P(\sqrt{G_{\varphi}^M})}{\sqrt{G_{\varphi}^M}}$, and  $W = \dfrac{|\nabla(G^M_{\varphi})|^2_A}{4|G^M_{\varphi}|^2} \quad \mbox{in} \ M \setminus \mathrm{supp}\,  \varphi.$ Moreover, $W$ is an optimal Hardy-weight for $P$ in $M$ (That is, $P-W$ is null-critical in $M$, and in particular, for any $\gl>1$ the operator $P-\gl W \not \geq 0$ outside any compact set in $M$).  

\medskip

{\bf We assume that $W\!>\!0$ outside a compact set $K$ satisfying $\mathrm{supp}\,\vgf \Subset K\subset M$.}

\medskip

{\bf{ Step 2 :}} Let $\rho$ (to be chosen later) be a positive smooth function on $M.$ Consider the operator $P_{\rho} = \rho P.$  In view of Remark~\ref{stoch_incomplete}, 
in order to prove stochastic incompleteness, it is sufficient to construct a function satisfying certain conditions. Let $u$ be a strictly negative $C^2$-smooth function such that  
$$u(x) := - (G_{\varphi}^M(x))^{1/2} \qquad \forall \ x \in M \setminus K.$$
Then 
$$(-P_{\rho}) u =  \rho W(G^{M}_{\varphi})^{1/2} \qquad \forall \ x \in M \setminus K.$$
Moreover, since  $G_{\varphi}^M \asymp G^M_{P}$ in a neighbourhood 
of $\bar\infty$, it follows from our assumption \eqref{eq-G0} that $u(x) \rightarrow 0$ as $x \rightarrow \bar\infty.$ 
Hence, $ u^{*} := \sup_{M} u =0$, and $\lim_{n\to\infty}u(x_n)=u^*$ if and only if  $x_n\rightarrow \bar\infty$. For any such a sequence, we eventually have,
$$
(-P_{\rho}) u(x_{n})  =  \rho(x_n) W(x_n) (G^{M}_{\varphi}(x_n))^{1/2}.
$$
Choose a strictly positive smooth function $\rho$ on $M$ such that 
$$
\rho(x) :=  \left( W(x) (G^{M}_{\varphi}(x))^{1/2} \right)^{-1} \qquad \forall x\in M\setminus K,  
$$
letting $ n \rightarrow \infty,$ we conclude  
$$\limsup_{n \rightarrow \infty} (-P_{\rho})u(x_{n}) = 1 > 0.$$
Hence, by Remark~\ref{stoch_incomplete}, $M$ is stochastically incomplete with respect 
to $ P_{\rho} := \rho P$. 
\end{proof}
\begin{remark}{\em 
We note that  the extra assumption that $W>0$ in a neighbourhood of $\bar \infty$ is satisfied in many cases. For example, if $M$ is a smooth bounded domain in $\R^n$, 
and the coefficients of $P$ are up to the boundary H\"older continuous, then by the Hopf lemma, $W>0$ in a neighbourhood of $\partial M$.
}
\end{remark}

\medspace

   \par\bigskip\noindent
\textbf{Acknowledgments.}
 D. Ganguly is partially supported by INSPIRE faculty fellowship (IFA17-MA98). Y.P.  acknowledges the support of the Israel Science Foundation (grant 637/19) founded by the Israel Academy of Sciences and Humanities. P. Roychowdhury is supported by Council of Scientific \& Industrial Research (File no.  09/936(0182)/2017-EMR-I).


\begin{thebibliography}{99}
 \bibitem{BB}  C.~B\"ar,  and G.~P.~Bessa, \emph{Stochastic completeness and volume growth}, Proc. Amer. Math. Soc. {\bf 138} (2010), 2629--2640.

\bibitem{CC} C.~Constantinescu, and A.~Cornea, \emph{``Potential Theory on Harmonic Spaces"}, with a preface by H.~Bauer. Die Grundlehren der mathematischen Wissenschaften, Band 158. Springer-Verlag, New York-Heidelberg, 1972.

\bibitem{DFP} B.~Devyver, M.~Fraas, and Y.~Pinchover, \emph{Optimal Hardy weight for second-order elliptic operator: an answer to a problem of Agmon}, J. Funct. Anal. {\bf 266} (2014),  4422--4489.

\bibitem{GP} D.~Ganguly, and Y.~Pinchover, \emph{Some new aspects of perturbation theory
			of positive solutions of second-order linear elliptic equations}, in a special issue dedicated to the memory of A. I. Volpert, Pure Appl.
			Funct. Anal. {\bf 5} (2020), 295--319.

\bibitem{GR2} A.~Grigory'an, \emph{Analytic and geometric background of recurrence and non-explosion of the Brownian motion on Riemannian manifolds}, Bull. Amer. Math. Soc. (N.S.) {\bf 36} (1999),  135--249.

\bibitem{GR1} A.~Grigory'an, X.~Huang, and  J.~Masamune, \emph{On stochastic completeness of jump processes}, Math. Z. {\bf 271} (2012),  1211--1239.

 \bibitem{GIM} G.~Grillo, K.~Ishige, and M.~Muratori, \emph{Nonlinear characterizations of stochastic completeness}, J. Math. Pures Appl. (9) {\bf 139} (2020), 63--82.
 
\bibitem{DSA} D.~Impera, S.~Pigola, and A.~G.~Setti,  \emph{Potential theory for manifolds with boundary and applications to controlled mean curvature graphs},
	J. Reine Angew. Math. {\bf 733} (2017), 121--159.

\bibitem{MV} L.~Mari, and D.~Valtorta,  \emph{On the equivalence of stochastic completeness and Liouville and Khas'minskii conditions in linear and nonlinear settings}, 
Trans. Amer. Math. Soc. {\bf 365} (2013), 4699--4727.

\bibitem{MM1} M.~Murata, \emph{On construction of Martin boundaries for second order elliptic equations}, Publ. Res. Inst. Math. Sci. {\bf 26} (1990), 585--627.

\bibitem{MM2} M.~Murata, and T.~Tsuchida,  \emph{Positive solutions of Schr\"odinger equations and Martin boundaries for skew product elliptic operators}, Adv. Differential Equations {\bf 22} (2017), 621--692.

\bibitem{MM3} M.~Murata,  and T.~Tsuchida, \emph{Monotonicity of non-Liouville property for positive solutions of skew product elliptic equations},
	Proc. Roy. Soc. Edinburgh Sect. A {\bf 150} (2020),  1429--1449.

\bibitem{NN} G.~Nenciu, and I.~Nenciu, \emph{Drift-diffusion equations on domains in $\R^d$: essential self-adjointness and stochastic completeness}, J. Funct. Anal. {\bf 273} (2017), 2619--2654.

 \bibitem{BPS} G.~Pacelli Bessa, S.~Pigola, and A.G.~Setti, \emph{On the $L^1$-Liouville property of stochastically incomplete manifolds}, Potential Anal. {\bf 39} (2013), 313--324.
 
 \bibitem{PPS} L.~F.~Pessoa, S.~Pigola, and A.~G.~Setti, \emph{Dirichlet parabolicity and $L^1$-Liouville property under localized geometric conditions}, J. Funct. Anal. {\bf 273} (2017), 652--693.
 
 \bibitem{PRS} S.~Pigola, M.~Rigoli, and A.~G.~Setti, \emph{A remark on the maximum principle and stochastic completeness}, Proc. Amer. Math. Soc. {\bf 131} (2003),  1283--1288.
 
 \bibitem{YP1} Y.~Pinchover, \emph{On nonexistence of any $\lambda_0$-invariant positive harmonic function, a counterexample to Stroock's conjecture}, Comm. Partial Differential Equations {\bf 20} (1995),  1831--1846.
 
\bibitem{YP2} Y,~Pinchover, \emph{Maximum and anti-maximum principles and eigenfunctions estimates via perturbation theory of positive solutions of elliptic equations}, Math. Ann. {\bf 314} (1999), 555--590.
 
  \bibitem{PI} Y.~Pinchover, and I.~Versano, \emph{On families of optimal Hardy-weights for linear second-order elliptic operators},
   J. Funct. Anal. {\bf 278} (2020), 108428, 27 pp..

 \bibitem{RP} R.~G.~Pinsky, \emph{``Positive Harmonic Functions and Diffusion"}, Cambridge Studies in Advanced Mathematics 45, Cambridge University Press, Cambridge, 1995.
 
\end{thebibliography}
\end{document}